 \newtheorem{thm}{Theorem}[section]
 \newtheorem{cor}[thm]{Corollary}
 \newtheorem{lem}[thm]{Lemma}
 \newtheorem{prop}[thm]{Proposition}
 \theoremstyle{definition}
 \newtheorem{defn}[thm]{Definition}
 \theoremstyle{remark}
 \newtheorem*{ex}{Example}
 \numberwithin{equation}{section}
\begin{document}

%
%
%
%
%
%
%
%
%

\title[ Biharmonic  Hypersurfaces ]
 {Survey On The Biharmonic Hypersurfaces In Terms Of the Induced Metric Of Tensor Ricci}

\author[N. Mosadegh]{N. Mosadegh}

\address{Department of Mathematics,
 Azarbaijan Shahid Madani University,\\
Tabriz 53751 71379, Iran}

\email{n.mosadegh@azaruniv.ac.ir}

\author{E. Abedi}
\address{Department of Mathematics,
 Azarbaijan Shahid Madani University,\\
Tabriz 53751 71379, Iran}
\email{esabedi@azaruniv.ac.ir}
\subjclass{Primary 53C42; Secondary 53C43, 53B25}

\keywords{Biharmonic hypersurfaces, Sasakian space forms.}

\date{January 1, 2004}
\dedicatory{}

\begin{abstract}
In this article, we study the biharmonic hypersurfaces in the Sasakian space form with the induced metric of tensor Ricci. We find the existence necessary and sufficient condition of the biharmonic hypersurfaces there. We show that the biharmonic Hopf hypersurfaces are minimal where gradient of the mean curvature is in direction of the structural vector fields. Furthermore, we prove that does not exist any biharmonic Hopf hypersurfaces when the gradient of the mean curvature is a principal direction.
\end{abstract}
\footnotetext[1]{The first author is as corresponding author.}
\maketitle
\section{Introduction}
\label{intro}
Consider an immersion $\psi: (M, g)\rightarrow(N, h)$ between two Riemannian manifolds, which is known as the critical point of the energy functional
\begin{eqnarray*}
E:C^{\infty}(M, N)\rightarrow R,\nonumber\\
 E(\psi)=\int_{M}|\tau(\psi)|^2d\vartheta,
\end{eqnarray*}
called a biharmonic map and quivalently $M$ is said the biharmonic submanifold, where $\tau(\psi)=\textsf{trace} \nabla d\psi$ known the tension field of $\psi$ see (\cite{Ee,Ee1}). Vanishing of the tension field characterizes the harmonic map as well. The Euler-Lagrangian equation associated to the energy functional, written as
 \begin{eqnarray}\label{7.0}
 0=\tau_2(\psi)&=&-\Delta \tau(\psi)- \textsf{trace}R^N(d\psi(.), \tau(\psi))d\psi(.),
 \end{eqnarray}
 see \cite{Jiang}. Here $\Delta=- \textsf{trace} \nabla^2$ stands for the Laplace-Beltrami operator, where $\nabla$ is a induced connection in the pull back bundle $\varphi^{-1}(TN)$ and $R^{N}$ is the curvature operator on $N$ in which defined by $R^N(X,Y)=[\nabla_X, \nabla_Y]- \nabla_{[X, Y]}$ for all $X$ and $Y$ tangent to $N$. We recall, it is known that vanishing the tension field is characterization of harmonic map, therefore, the curiosity is about nonharmonic biharmonic maps which are called proper biharmonic.

 In 1980s, the characterization formula of the biharmonic Riemannian immersion in the Euclidean space, that is, $\Delta H=0$, where $H$ denotes the mean curvature vector field, was introduced by Chen \cite{Ch2}, independently. Then, it was shown that every biharmonic hypersurface in the Euclidean space $E^n$ is a minimal hypersurface where $n=3,4,5$ (see\cite{Ch1}, \cite{Has},\cite{Gat}). Yu Fu \cite{YU1} proved that every biharmonic hypersurface with three distinct principal curvatures in the Euclidean space is minimal.
Also, with respect to the number of distinct principal curvature concerning the biharmonic hypersurfaces, the authors reached to the following results. Every biharmonic Hopf hypersurface in the complex Euclidean space is a minimal one. There exist no proper biharmonic Ricci saliton hypersurface in the Euclidean space in such away that the potential vector field is a principal vector in the direction of the gradient of the mean curvature vector field. Every biharmonic hypersurface equipped with the recurrent operator in the Euclidean space is minimal as well. The biharmonic Hopf QR-hypersurfaces are minimal ones in the quaternionic Euclidean space (see\cite{Abedi3,Abedi4,Abedi5,Abedi6}).

Additionally, in space of the nonconstant sectional curvature, several classification concerning the proper biharmonic has been investigated. For example, classified all the proper biharmonic Hopf cylinder in $3$-dimensional Sasakian space forms and all the proper biharmonic homogeneous real hypersurfaces in the complex projective space \cite{JI,JI1}. Also, the authors have already obtained the biharmonic conditions of CMC hypersurfaces in the special warped product, and the nonexistence result of the biharmonic Hopf hypersurfaces in the Sasakian space form associated to the Tanaka-Webster connection, see \cite{Abedi2,AAAA}.

TThe aim of this paper is studying about the biharmonic hypersurfaces in the Sasakian space form, where the ambient manifold is equipped with the induced metric of the tensor Ricci. We reach the necessary and sufficient condition about the existence of the biharmonic hypersurfaces there and call them Ricci-biharmonic hypersurfaces. In the first part of this paper, we present an example of a biharmonic surface in the Sasakian space form $R^3(-3)$ which is minimal. we obtain the nonexistent result about the Ricci-biharmonic Hopf hypersurfaces, where the \textsf{grad}H is a principal direction. Also, we show a Ricci-biharmonic Hopf hypersurface is minimal where the $\mathsf{grad}$H is in the direction of structural vector fields.

\section{Preliminary}
Throughout this paper all the objects are considered to be smooth. In this section we start with the notations of the Sasakian structure that will be used throughout the paper. Let $\Gamma(TM)$ and $\Gamma(TM^{(p,q)})$ denote the set of all the vector fields and tensor fields of type $(p,q)$ on manifold $M$, respectively. Let $\nabla$ stands for the Levi-Civita connection on $M$ and $g\in \Gamma(TM^{(0,2)})$. Take the triple $(\varphi, \xi, \eta)$, which $\varphi \in \Gamma(TM^{(1,1)}), \eta\in \Gamma(TM^{(1,0)})$ and $\xi\in \Gamma (TM)$, is known as an almost contact structure on the odd dimensional manifold $M^{2m+1}$, provided that the following equations hold for any $X, Y\in \Gamma(TM^{2m+1})$
\begin{eqnarray}
\varphi^2(X)=-X+\eta(X),\ \ \
\eta(\xi)=1,\ \ \ \varphi(\xi)=0.
\end{eqnarray}
Now, a Riemannian metric $g$ is endowed on $M^{2m+1}$, then $(\varphi,\xi,\eta,g)$ is called almost contact metric structure if
\begin{eqnarray}
\eta(X)=g(\xi,X), \ \
g(\varphi X, \varphi Y)&=&g(X, Y)-\eta(X)\eta(Y).
\end{eqnarray}
An almost contact metric structure on $M^{2m+1}$ is a contact metric structure where $d\eta(X,Y)=g(X, \varphi Y)$. Now, a contact metric manifold is named a Sasakian manifold if and if
\begin{eqnarray}\label{7.9}
(\nabla_X \varphi)Y=g(X,Y)\xi- g(Y,\xi)X,
\end{eqnarray}
where $X,Y\in \Gamma(TM^{2m+1})$ and $\nabla$ stands for the Levi-Civita connection on $M^{2m+1}$. For a Sasakian manifold it is known that
\begin{eqnarray}
\nabla_X \xi= -\varphi X.
\end{eqnarray}
At point $x$ of a Sasakian manifold $M^{2m+1}$ a $\varphi$-section means a 2-plane spanned by $\{X, \varphi X\}$ in $T_x(M^{2m+1})$, where $X$ is an unit tangent vector orthogonal to $\xi$. A Sasakian manifold $(M^{2m+1}, \varphi, \xi, \eta, g)$ has constant $\varphi$-sectional curvature $c\in R$, provided that its curvature tensor satisfies
\begin{eqnarray}\label{7.15}
R(X, Y)Z &=& -\frac{c-1}{4}\{\eta(Z)[\eta(Y)X - \eta(X)Y] \nonumber \\
& & + [g(Y, Z)\eta(X)- g(X, Z)\eta(Y)]\xi\nonumber \\
& & + g( \varphi X, Z)\varphi Y + 2g(\varphi X, Y)\varphi Z - g(\varphi Y,Z)\varphi X\}\\
& & + \frac{c+3}{4}\{ g(Y, Z)X - g(X, Z)Y\}, \nonumber
\end{eqnarray}
 where $X, Y$ and $Z\in \Gamma(TM^{2m+1})$. A Sasakian manifold of constant $\varphi$-sectional curvature $c$ is called a Sasakian space form and determined by $\overline{M}^{2m+1}(c)$.

At the end of this section, we construct an example of a Riemannian biharmonic surface in the Sasakian space form $R^3(-3)$, which is harmonic($H=0$) as well.
 \begin{ex}
Let $R^{3}$ be a hypersurface in the Euclidean space $R^4$. Let $J$ be the standard almost complex structure in $R^4$ and set $\xi=-J N$, where $N$ is an unit normal vector field of $R^3$. Define $\varphi$ by $\pi o J$, where $\pi$ is the natural projection of the tangent space of $R^4$ in to the tangent space of $R^3$. Let $(x, y, z)$ be the Euclidean coordinate in $R^3$, we consider
\begin{eqnarray}
\eta= \frac{1}{2}(dz-ydx),\ \
g=\eta \otimes \eta +\frac{1}{4}(dx^2+ dy^2),\nonumber\\ \ \varphi(X\frac{\partial}{\partial x}+ Y\frac{\partial}{\partial y}+Z\frac{\partial}{\partial z})=Y\frac{\partial}{\partial x}-X\frac{\partial}{\partial y}+Yy\frac{\partial}{\partial z}
\end{eqnarray}
where $\xi=2\frac{\partial}{\partial z}$. Then $(R^3, \varphi, \eta, \xi, g)$ is called a Sasakian space form where its $\varphi$-sectional curvature is $c=-3$. Let $f\in C^{\infty}(R^3(-3))$ defines $f(x,y,z)=z$, then we consider the level set of $f$ like $M^2=f^{-1}(0)= \{(x,y,z)\in R^3; z=0\}$ which is claimed as a minimal surface (as well as biharmonic) of $R^{3}(-3)$. In order to show this property, we choose an appropriate orthonormal frame field on $R^3(-3)$ such as
\begin{eqnarray}
e_1= 2(\frac{\partial}{\partial x}+ y \frac{\partial}{\partial z}),\ \ e_2=-2\frac{\partial}{\partial y}\ \ ,e_3= 2\frac{\partial}{\partial z}
\end{eqnarray}
then we calculate $\textsf{grad} f=\sum_{i=1}^3 e_i(f)e_i = 2(ye_1+e_3)$. So, $N=\frac{\textsf{grad} f}{|\textsf{grad }f|}=\frac{1}{\sqrt{1+y^2}}(ye_1+e_3)$ is an unit normal vector on $M^2$. Also, $-\varphi N=V=-\frac{y}{\sqrt{1+y^2}}e_2 $ is in $\Gamma(TM^2)$. With respect to these last vectors we take an orthonormal frame field $\{E_1=\frac{V}{|V|}=-e_2,\ \ E_2= -\frac{1}{\sqrt{1+y^2}}(e_1-ye_3)\}$ on $M^2$. Some easy computations show the following bracket relations, which we need to calculate the Weingarten operator $A$ of $M^2$ in the Sasakian Space $R^3(-3)$, as following
\begin{eqnarray*}
&&[e_1,e_2]=2e_3,\ \ [e_1, e_3]=0,\ \ [e_2, e_3]=0\\
&&\nabla_{e_1}e_2=-\nabla_{e_2}e_1 =e_3,\ \ \nabla_{e_1}e_3=\nabla_{e_3}e_1=-e_2,\ \ \nabla_{e_2}e_3=\nabla_{e_3} e_2= e_1
\end{eqnarray*}
where $\nabla$ is the Levi-Civita connection on $R^3(-3)$. After all, we can calculate
\begin{eqnarray*}
-AE_1= \nabla_{E_1} N= \frac{1-y^2}{1+y^2}E_2,\\
-AE_2=\nabla_{E_2} N=\frac{1-y^2}{1+y^2}E_1
\end{eqnarray*}
then we have
\begin{eqnarray*}
A=\left(
  \begin{array}{cc}
    0 & \frac{y^2-1}{1+y^2} \\
    \frac{y^2-1}{1+y^2} & 0 \\
  \end{array}
\right).
\end{eqnarray*}
So, the shape operator matrix of $M^2$ presents that the mean curvature $|H|=0$. In other words, $xy$-plane is a minimal or harmonic surface in the Sasakian space form $R^3(-3)$.
\end{ex}

\section{Ricci Biharmonic Hypersurfaces}

In this paper the tensor Ric(X,Y) is considered a metric in a Sasakian space form $(\overline{M}^{(2m+1)}(c),\overline{g})$ and written as
 \begin{eqnarray}\label{7.1}
 \mathsf{Ric}(X,Y)&=&\frac{(m+1)c+3m-1}{2}\overline{g}(X, Y)\nonumber\\
 && + \frac{(m+1)(1-c)}{2}\eta(X)\eta(Y),
 \end{eqnarray}
 for $X$ and $Y \in \Gamma(T\overline{M}^{(2m+1)}(c))$.

By the Koszul formula the connection $\nabla^{\ast}$ associated to the induced metric of the tensor Ricci is characterized here 
\begin{lem}\label{7.14}
Let $\overline{M}^{2m+1}(c)$ be a Sasakian space form with induced metric of the tensor Ricci. Then, the associated connection $\nabla^{*}$ satisfies
\begin{eqnarray}
\nabla^{*}_X Y= \overline{\nabla}_X Y-\frac{k_1}{k_2}(\eta(X)\varphi Y+\eta(Y)\varphi X),
\end{eqnarray}
where $\overline{\nabla}$ denotes the Levi-Civita connection on $\overline{M}^{2m+1}(c)$, $k_1=\frac{(m+1)(1-c)}{2}$ and $k_2=\frac{(m+1)c+3m-1}{2}$.
\end{lem}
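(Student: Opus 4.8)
The plan is to recognise $\nabla^{*}$ as the Levi-Civita connection of the symmetric $(0,2)$-tensor $G := \mathsf{Ric}$ of \eqref{7.1} and to verify the stated formula by checking that its right-hand side defines a torsion-free, $G$-metric connection; the uniqueness encoded in the Koszul formula then finishes the argument. First I would record from \eqref{7.1} that $G = k_2\,\overline{g} + k_1\,\eta\otimes\eta$ with $k_1,k_2$ as stated, and observe that $G$ is nondegenerate provided $k_2\neq 0$ (equivalently $c\neq\frac{1-3m}{m+1}$, which is exactly what makes $\frac{k_1}{k_2}$ meaningful): indeed $G(\xi,X)=0$ for $X\perp\xi$, $G=k_2\,\overline{g}$ on $\xi^{\perp}$, and $G(\xi,\xi)=k_1+k_2=2m$, so $G$ is block-diagonal with invertible blocks. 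Hence $G$ admits a unique torsion-free compatible connection, characterised by the Koszul formula.

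Next I set $\Theta(X,Y):=\overline{\nabla}_XY-\frac{k_1}{k_2}\big(\eta(X)\varphi Y+\eta(Y)\varphi X\big)$ and verify the two defining properties. Torsion-freeness is immediate, since the correction term $D(X,Y):=-\frac{k_1}{k_2}\big(\eta(X)\varphi Y+\eta(Y)\varphi X\big)$ is symmetric in $X,Y$, so $\Theta(X,Y)-\Theta(Y,X)=\overline{\nabla}_XY-\overline{\nabla}_YX=[X,Y]$. For metric compatibility I would compute $X\,G(Y,Z)-G(\Theta(X,Y),Z)-G(Y,\Theta(X,Z))$. Splitting $G=k_2\overline{g}+k_1\eta\otimes\eta$, the $k_2\overline{g}$-part drops out because $\overline{\nabla}$ is $\overline{g}$-metric, and the rest reduces to three ingredients: $\overline{\nabla}_X\xi=-\varphi X$ (whence $X(\eta(Y))=-\overline{g}(\varphi X,Y)+\eta(\overline{\nabla}_XY)$), the identity $\eta\circ\varphi=0$ (which kills the $\eta\otimes\eta$-part of $G(D(\cdot,\cdot),\cdot)$ since $\eta(D(X,Y))=0$), and the skew-symmetry $\overline{g}(\varphi X,Y)=-\overline{g}(X,\varphi Y)$. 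Carrying this out, $X\,G(Y,Z)-G(\overline{\nabla}_XY,Z)-G(Y,\overline{\nabla}_XZ)$ reduces to $-k_1\big(\eta(Z)\,\overline{g}(\varphi X,Y)+\eta(Y)\,\overline{g}(\varphi X,Z)\big)$, while $G(D(X,Y),Z)+G(Y,D(X,Z))$ reduces to $-k_1\big(\eta(Y)\,\overline{g}(\varphi X,Z)+\eta(Z)\,\overline{g}(\varphi X,Y)\big)$ after the pair $\overline{g}(\varphi Y,Z)+\overline{g}(\varphi Z,Y)$ cancels by skew-symmetry; the two expressions coincide, so $\Theta$ is $G$-metric.

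By the uniqueness of the Levi-Civita connection of $G$, this forces $\nabla^{*}=\Theta$, which is precisely the asserted formula. I expect the only real difficulty to be bookkeeping: one must track the $\eta\otimes\eta$-terms carefully and deploy $\overline{\nabla}_X\xi=-\varphi X$ together with the skew-symmetry of $\varphi$ exactly where the unwanted terms are meant to cancel. There is no conceptual obstacle, and the same computation could equivalently be phrased directly through the six-term Koszul formula for $G$ without introducing $\Theta$ at all.
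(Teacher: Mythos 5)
Your proposal is correct, but it organizes the argument differently from the paper. The paper \emph{derives} the formula: it writes the six-term Koszul expression for $\mathsf{Ric}=k_2\overline{g}+k_1\,\eta\otimes\eta$, simplifies it using the Koszul formula for $\overline{\nabla}$ together with $\overline{\nabla}_X\xi=-\varphi X$, arrives at the identity $k_1\overline{g}(\nabla^{*}_Y Z,\xi)\xi+k_2\nabla^{*}_Y Z=k_1\big(\overline{g}(\overline{\nabla}_Y Z,\xi)\xi-\eta(Z)\varphi Y-\eta(Y)\varphi Z\big)+k_2\overline{\nabla}_Y Z$, and then solves for $\nabla^{*}_YZ$ by first comparing $\xi$-components. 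You instead take the stated expression as a candidate connection $\Theta$, verify torsion-freeness (the correction term is symmetric) and compatibility with $G=k_2\overline{g}+k_1\,\eta\otimes\eta$, and conclude by uniqueness of the Levi-Civita connection of the nondegenerate tensor $G$; your two reductions, $X\,G(Y,Z)-G(\overline{\nabla}_XY,Z)-G(Y,\overline{\nabla}_XZ)=-k_1\big(\eta(Z)\overline{g}(\varphi X,Y)+\eta(Y)\overline{g}(\varphi X,Z)\big)$ and the matching value of $G(D(X,Y),Z)+G(Y,D(X,Z))$ after the skew-symmetry cancellation, both check out. The two routes rest on the same characterization of the Levi-Civita connection: the paper's derivation produces the formula without having to guess it, while your verification involves lighter bookkeeping and has the added merit of making explicit the nondegeneracy hypothesis $k_2\neq 0$ (together with $G(\xi,\xi)=k_1+k_2=2m$) that the paper uses silently whenever it divides by $k_2$.
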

\begin{proof}
Suppose that $\overline{g}$ and $\overline{\nabla}$ are the Riemannian metric and the Levi-Civita connection on $\overline{M}^{2m+1}(c)$, respectively. From the Koszul formula and the equation $(\ref{7.1})$ we have
\begin{eqnarray}\label{7.2}
\mathsf{Ric}(\nabla^{*}_Y Z, X)&=& Y \mathsf{Ric}(Z, X)+ Z\mathsf{Ric}(X, Y)+ X\mathsf{Ric}(Y, Z) \nonumber\\
&& -\mathsf{Ric}(Y, [Z, X])+ \mathsf{Ric}(Z, [X, Y])+ \mathsf{Ric}(X, [Y,Z]) \nonumber\\
&=& k_1(\eta(\overline{\nabla}_Y Z)\eta(X)- \overline{g}(\varphi Y, X)\eta(Z)- \overline{g}(\varphi Z, X)\eta(Y)) \nonumber\\ &&+k_2\overline{g}(\overline{\nabla}_Y Z, X),
\end{eqnarray}
then the equations $(\ref{7.1})$ and $(\ref{7.2})$ yield
\begin{eqnarray*}\label{7.5}
k_1\overline{g}(\nabla^{*}_Y Z, \xi)\xi+k_2\nabla^{*}_Y Z&=& k_1\big(\overline{g}(\overline{\nabla}_Y Z, \xi)\xi-\eta(Z)\varphi Y-\eta(Y)\varphi(Z)\big)\nonumber\\
&&+k_2\overline{\nabla}_Y Z,
\end{eqnarray*}
which shows
\begin{eqnarray*}\label{7.4}
\overline{g}(\nabla^{*}_Y Z, \xi)=\overline{g}(\overline{\nabla}_Y Z, \xi),
\end{eqnarray*}
then
\begin{eqnarray*}
k_2\nabla^{*}_Y Z= -k_1(\eta(Z)\varphi Y+\eta(Y)\varphi Z)+k_2\overline{\nabla}_Y Z,
\end{eqnarray*}
so, we reach the result.
\end{proof}

The bitension field of an isometric immersion $\psi: M^{2m}\rightarrow \overline{M}^{2m+1}$ with the connection $\nabla^{*}$ holds
\begin{eqnarray}
\tau_2^{\star}(\psi)= -\Delta^{\star}(H)- \textsf{trace} R^{\star}(d\psi(.), H)d\psi(.),
\end{eqnarray}
here $\Delta^{\star}$ stands for the Laplace-Beltrami operator on sections of the pull back bundle $\psi^{-1}(\Gamma (T\overline{M}^{2m+1}(c)))$ and $R^{\star}$ denotes the curvature operator on the Sasakian space form $\overline{M}^{2m+1}(c)$ where the following sign conventions hold
\begin{eqnarray}
\Delta^{\star} X = - \textsf{trace} \nabla ^{\star^2} X, \ \ \ \ \forall X \in \psi^{-1}(\Gamma (T\overline{M}^{2m+1})),\label{7.19}\\
 R^{\star}(X, Y)=[\nabla^{\star}_X, \nabla^{\star}_Y]- \nabla^{\star}_{[X, Y]}\ \ \ \ X, Y\in{\Gamma (T\overline{M}^{2m+1})}.\label{7.13}
\end{eqnarray}

\begin{defn}\label{7.6}
A hypersurface $M^{2m}$ in the Sasakian space form $\overline{M}^{2m+1}(c)$ with the induced metric of tensor Ricci is called Ricci-biharmonic hypersurface if $\tau_2^{\star}(\psi)=0$.
\end{defn}
Let $(\overline{M}^{2m+1}(c), \varphi, \xi, \eta, \overline{g})$ be a Sasakian space form. We suppose that $\xi$ and $V=-\varphi N$ are the tangent vector fields on $M^{2m}$, where $N$ is a local unit normal vector on $M^{2m}$ as well.
\begin{lem}\label{7.20}
Let $M^{2m}$ be an immersed hypersurface in the Sasakian space form $\overline{M}^{2m+1}(c)$ with the induced metric of tensor Ricci, isometrically. Then
\begin{eqnarray}
\Delta^{*}H= \Delta H +\big(3\frac{k_1}{k_2} + (\frac{k_1}{k_2})^2\big)H- 2 \frac{k_1}{k_2}\eta(\mathsf{grad}|H|)V,
\end{eqnarray}
where $H$ is the mean curvature vector field of $M^{2m}$ in the Sasakian space form.
\end{lem}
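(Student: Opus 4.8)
The plan is to compute $\Delta^{*}H$ directly from the definition \eqref{7.19}, expressing everything in terms of the Levi-Civita data via Lemma \ref{7.14}. First I would fix a local orthonormal frame $\{e_i\}_{i=1}^{2m}$ on $M^{2m}$, geodesic at the point of evaluation with respect to the induced metric, and write $\Delta^{*}H = -\sum_i \nabla^{*}_{e_i}\nabla^{*}_{e_i}H$ (the first-covariant-derivative terms dropping out at the chosen point). Then I would substitute $\nabla^{*}_X Y = \overline{\nabla}_X Y - \frac{k_1}{k_2}\bigl(\eta(X)\varphi Y + \eta(Y)\varphi X\bigr)$ repeatedly: once for the outer derivative acting on $\nabla^{*}_{e_i}H$, and once inside for $\nabla^{*}_{e_i}H = \overline{\nabla}_{e_i}H - \frac{k_1}{k_2}\eta(e_i)\varphi H$ (using $\eta(H)=0$ since $H$ is normal and $\xi$ is tangent, so only the $\eta(e_i)\varphi H$ term survives). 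Expanding gives $\Delta^{*}H = \Delta H + \frac{k_1}{k_2}(\text{cross terms}) + (\frac{k_1}{k_2})^2(\text{quadratic term})$, where $\Delta H = -\sum_i \overline{\nabla}_{e_i}\overline{\nabla}_{e_i}H$ is the Levi-Civita rough Laplacian appearing in the statement.

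The main work is to identify the linear-in-$\frac{k_1}{k_2}$ correction. Collecting terms, one gets contributions of the form $-\sum_i \overline{\nabla}_{e_i}\bigl(\eta(e_i)\varphi H\bigr)$, $-\sum_i \eta(e_i)\varphi(\overline{\nabla}_{e_i}H)$, and the pieces coming from the outer $\nabla^{*}$ hitting $\overline{\nabla}_{e_i}H$ with weight $\eta(e_i)$ (which produces a $\varphi(\overline{\nabla}_{e_i}H)$ term) together with a $\varphi(\nabla^{*}_{e_i}H)$-type term with weight $\eta(e_i)$. Here I would use the Sasakian identities $\overline{\nabla}_X\xi = -\varphi X$ and $(\overline{\nabla}_X\varphi)Y = \overline{g}(X,Y)\xi - \eta(Y)X$, together with $\sum_i \eta(e_i) e_i = \xi^{\top} = \xi$ (since $\xi$ is tangent to $M^{2m}$), $\sum_i \eta(e_i)\varphi e_i = \varphi\xi = 0$, and the Weingarten relation $\overline{\nabla}_{e_i}N = -A e_i$ to rewrite $\overline{\nabla}_{e_i}H = \overline{\nabla}_{e_i}(|H|N) = e_i(|H|)N - |H|Ae_i$. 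Feeding these in, the $\eta(e_i)$-weighted sums collapse to evaluations along $\xi$, and the $\varphi$'s acting on normal vectors turn $\varphi N = -V$ into tangential $V$-terms. After simplification the linear correction should organize into $3\frac{k_1}{k_2}H - 2\frac{k_1}{k_2}\eta(\mathsf{grad}|H|)V$: the coefficient $3$ arises from three separate $\varphi^2 N = -N$ type cancellations (one from $\overline{\nabla}_\xi$ hitting $\varphi H$, one from each $\varphi\circ\overline{\nabla}_\xi$ ordering), and the $V$-term is precisely the residue $-2\frac{k_1}{k_2}\xi(|H|)\varphi N = 2\frac{k_1}{k_2}\xi(|H|)V$ written via $\xi(|H|) = \eta(\mathsf{grad}|H|)$. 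The quadratic term is immediate: $(\frac{k_1}{k_2})^2\sum_i \eta(e_i)^2 \varphi^2 H = (\frac{k_1}{k_2})^2\eta(\xi)\varphi^2 H = -(\frac{k_1}{k_2})^2(-H) $ after using $\sum_i\eta(e_i)^2 = |\xi|^2 = 1$ and $\varphi^2 H = -H$ (valid since $\eta(H)=0$).

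The step I expect to be the genuine obstacle is bookkeeping the linear correction correctly — in particular tracking which terms carry a $\varphi$ versus a $\varphi^2$, and making sure the $\overline{\nabla}_{e_i}$ acting on the scalar weight $\eta(e_i)$ (giving $\overline{g}(\overline{\nabla}_{e_i}\xi, e_i) = -\overline{g}(\varphi e_i, e_i) = 0$ plus a Weingarten piece $\overline{g}(e_i, -Ae_i)\langle\cdot,N\rangle$ entering through the normal component) is handled without double-counting. Once the frame-independence of the partial sums is confirmed via $\sum_i \eta(e_i)e_i = \xi$ and $\sum_i \eta(e_i)\varphi e_i = 0$, the remaining manipulations are the routine Sasakian and Gauss–Weingarten substitutions, and the stated identity follows.
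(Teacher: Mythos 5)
Your proposal is correct and follows essentially the same route as the paper's own proof: expand $\nabla^{*}$ via Lemma \ref{7.14} in a parallel orthonormal frame, use $\eta(H)=0$, the Sasakian identities and the Weingarten formula (in particular $A_H\xi=-|H|V$ and $\varphi V=N$, which produce the three $H$-contributions and the coefficient $3\frac{k_1}{k_2}$), and collect the quadratic term from $\varphi^2H=-H$. The only quibble is a sign slip in your parenthetical identification of the $V$-term: the surviving residue is $+2\frac{k_1}{k_2}\xi(|H|)\varphi N=-2\frac{k_1}{k_2}\xi(|H|)V$, consistent with the final formula you state, not $+2\frac{k_1}{k_2}\xi(|H|)V$; this does not affect the structure of the argument.
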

\begin{proof}
Let $\nabla^{\star}$, $\overline{\nabla}$ and  $\nabla$  denote the connections associated to induced metric of the tensor Ricci, Riemannian metric $\overline{g}$ and the Levi-Civita connection on $\overline{M}^{2m+1}(c)$ and  $M^{2m}$, respectively. We consider a parallel local orthonormal frame field $\{e_\alpha\}_{\alpha=1}^{2m}$ at $p \in M^{2m}$, the equation $(\ref{7.9})$ and the Weingarten operator $\overline{\nabla}_{e_\alpha}H= \nabla^{\perp}_{e_\alpha}H-A_H e_\alpha,$ then
 \begin{eqnarray}\label{7.12}
\Delta^{\star} H &=& -\sum_{\alpha=1}^{2m} \nabla^{\star}_{e_\alpha}\nabla^{\star}_{e_\alpha} H \nonumber\\
                 &=& -\sum_{\alpha=1}^{2m} \nabla^{\star}_{e_\alpha}\big(\overline{\nabla}_{e_\alpha} H - \frac{k_1}{k_2}( \varphi (H)\eta(e_\alpha) + \varphi (e_\alpha) \eta(H))\big)\nonumber\\
                 &=& -\sum_{\alpha=1}^{2m} \big \{\overline{\nabla}_{e_\alpha}\overline{\nabla}_{e_\alpha} H - \frac{k_1}{k_2}( \varphi (\overline{\nabla}_{e_\alpha} H)\eta(e_\alpha) + \varphi (e_\alpha) \eta(\overline{\nabla}_{e_\alpha} H))\nonumber\\
                 &&-\frac{k_1}{k_2}\eta(e_\alpha)(\overline{\nabla}_{e_\alpha} \varphi H- \frac{k_1}{k_2}\varphi^2 H \eta(e_\alpha))\big\} \nonumber\\
                 &=& -\sum_{\alpha=1}^{2m} \{\overline{\nabla}_{e_\alpha}\overline{\nabla}_{e_\alpha} H +\frac{k_1}{k_2}\eta(A_H e_\alpha)\varphi e_\alpha\}\nonumber\\
                  && - 2 \frac{k_1}{k_2}(\eta(\mathsf{grad}|H|)V+ \varphi A_H \xi)+(\frac{k_1}{k_2})^2 H.
\end{eqnarray}
We consider the following equations 
\begin{eqnarray*}\label{7.10}
\sum_{\alpha=1}^{2m} \eta(\overline{\nabla}_{e_\alpha} H)\varphi e_\alpha = \sum_{\alpha=1}^{2m} g(\overline{\nabla}_{e_\alpha}H, \xi)\varphi e_\alpha
= \sum_{\alpha=1}^{2m}g(H, \varphi e_\alpha)\varphi e_\alpha,
\end{eqnarray*}
on the one hand we have
\begin{eqnarray*}\label{7.11}
\sum_{\alpha=1}^{2m} \eta(\overline{\nabla}_{e_\alpha} H)\varphi e_\alpha = \sum_{\alpha=1}^{2m} g(\overline{\nabla}_{e_\alpha}H, \xi)\varphi e_\alpha
=-\sum_{\alpha=1}^{2m}g(A_He_\alpha , \xi)\varphi e_\alpha,
\end{eqnarray*}
consequently
\begin{eqnarray}\label{7.18}
A_H \xi=-|H|V.
\end{eqnarray}
where $-\varphi N= V$. By the above we calculate the first and second terms of $(\ref{7.12})$ and one obtain 
\begin{eqnarray}\label{7.17}
\sum_{\alpha=1}^{2m}g(A_H e_\alpha, \xi)\varphi e_\alpha &=& \sum_{\alpha, \beta=1}^{2m} g(A_H e_\alpha, \xi)(g(\varphi e_\alpha, e_\beta)e_\beta + g(\varphi e_\alpha, N)N)\nonumber\\
&=& -|H|\sum_{\alpha, \beta=1}^{2m} g(e_\alpha, V)(-g(e_\alpha,\varphi e_\beta)e_\beta - g(e_\alpha, \varphi N)N)\nonumber\\
&=&-|H|\sum_{\beta=1}^{2m} (g(V,\varphi e_\beta)e_\beta + g(V,V)N)\nonumber\\
&=&-H,
\end{eqnarray}
also  
\begin{eqnarray}
\overline{\nabla}_{e_\alpha} \overline{\nabla}_{e_\alpha} H&=&\nabla^{\perp}_{e_\alpha}\nabla^{\perp}_{e_\alpha} H\nonumber \\ &&-A_{\nabla^\perp_{e_\alpha}H}e_\alpha-\nabla_{e_\alpha}A_H(e_\alpha)-B(e_\alpha, A_H(e_\alpha)),
\end{eqnarray}
hence, we obtain
\begin{eqnarray}\label{7.16}
\Delta H=-\sum_{\alpha=1}^{2m}\overline{\nabla}_{e_\alpha}\overline{\nabla}_{e_\alpha} H &=&
-\Delta^{\perp} H + \textsf{trace}B(.,A_H .)\nonumber\\
&&+ \textsf{trace} A_{\nabla_{(.)}^{\perp}H}(.)+ \textsf{trace}\nabla_{(.)} A_H(.).
\end{eqnarray}
After all, take the equations $(\ref{7.18})$, $(\ref{7.17})$ and $(\ref{7.16})$  then replace them in $(\ref{7.12})$ we get the result.
\end{proof}
In the proof of the main results we need the following lemma.
\begin{lem}\label{7.21}
Let $\psi: M^{2m} \longrightarrow \overline{M}^{2m+1}(c)$ be an isometric immersion of $2m$-dimensional hypersurface $M^{2m}$ in the Sasakian space form $\overline{M}^{2m+1}(c)$ with induced metric of the tensor Ricci. Then
\begin{eqnarray}
\mathsf{trace}R^{*}(d\psi(.),H)d\psi(.)= kH,
\end{eqnarray}
where $k=-\frac{-9 - 11 m + 18 m^2 + c^2 (3 + 5 m + 2 m^2) +
  2 c (3 + 11 m + 6 m^2)}{2 (-1 + c + 3 m + c m)^2}$ and $H$ denotes the mean curvature vector field of $M^{2m}$ in $\overline{M}^{2m+1}(c)$.
\end{lem}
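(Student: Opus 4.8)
The plan is to reduce everything to Levi-Civita data by means of Lemma \ref{7.14}. Set $a=\frac{k_1}{k_2}$ and let $D$ be the symmetric $(1,2)$-tensor
$D(X,Y)=\nabla^{*}_XY-\overline{\nabla}_XY=-a\big(\eta(X)\varphi Y+\eta(Y)\varphi X\big)$.
Since both $\overline{\nabla}$ and $\nabla^{*}$ are torsion-free, the curvature of $\nabla^{*}$ is given by the standard difference formula
\begin{eqnarray*}
R^{*}(X,Y)Z&=&\overline{R}(X,Y)Z+(\overline{\nabla}_XD)(Y,Z)-(\overline{\nabla}_YD)(X,Z)\\
&&+D(X,D(Y,Z))-D(Y,D(X,Z)),
\end{eqnarray*}
where $\overline{R}$ is the curvature $(\ref{7.15})$ of $\overline{M}^{2m+1}(c)$. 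First I would expand the three correction blocks using the Sasakian identities $(\ref{7.9})$, $\overline{\nabla}_X\xi=-\varphi X$, and the resulting $(\overline{\nabla}_X\eta)(Y)=-\overline{g}(\varphi X,Y)$; this exhibits $R^{*}$ as $\overline{R}$ plus a universal expression quadratic in the tensors $\eta,\varphi,\overline{g}$ with coefficients $a$ and $a^{2}$.

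Next I would take the trace. Fixing a local orthonormal frame $\{e_\alpha\}_{\alpha=1}^{2m}$ of $M^{2m}$, so that $\{e_1,\dots,e_{2m},N\}$ is orthonormal in $T\overline{M}^{2m+1}(c)$, the quantity to evaluate is $\sum_\alpha R^{*}(e_\alpha,H)e_\alpha$ with $H=|H|N$ normal and $\xi,\ V=-\varphi N$ tangent to $M^{2m}$. For the $\overline{R}$-part I would substitute $(\ref{7.15})$ and contract, using $\eta(H)=|H|\,\eta(N)=0$, $\varphi H=-|H|V$, and sums of the type $\sum_\alpha\overline{g}(\varphi e_\alpha,H)\varphi e_\alpha$, exactly as in the contractions carried out around $(\ref{7.17})$ and $(\ref{7.18})$; this already produces a constant (depending on $c$ and $m$) times $H$. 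For the correction blocks, the derivatives of $\eta$ and $\varphi$ together with $\overline{\nabla}_X\xi=-\varphi X$ turn $(\overline{\nabla}_{e_\alpha}D)(H,e_\alpha)-(\overline{\nabla}_HD)(e_\alpha,e_\alpha)$ and $D(e_\alpha,D(H,e_\alpha))-D(H,D(e_\alpha,e_\alpha))$ into sums that, after using $\varphi\xi=0$, $\varphi^{2}=-\mathrm{id}+\eta\otimes\xi$, $\varphi N=-V$, $\varphi V=N$ (on the surviving components) and the orthonormal-frame contractions, collapse to multiples of $H$ with coefficients linear, respectively quadratic, in $a$.

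Finally I would add the three contributions and put the coefficient over the common denominator $k_2^{2}=\big(\tfrac{(m+1)c+3m-1}{2}\big)^{2}=\tfrac14(c+3m+cm-1)^{2}$; since $a=k_1/k_2$ and the $\varphi$-sectional curvature $c$ enter rationally, the output is automatically of the form $kH$ with $k$ a rational function of $c$ and $m$, and matching numerators yields the stated value of $k$. The main obstacle I anticipate is purely the bookkeeping: $D$ carries $\eta$ in both slots, so each covariant derivative $(\overline{\nabla}_XD)(Y,Z)$ splits into several pieces (from $\overline{\nabla}\eta$, from $\overline{\nabla}\varphi$, and from the frame terms), and in the trace one must scrupulously separate tangential and normal components — in particular $\varphi$ of a tangent vector need not be tangent, and only the combinations involving $N$, $\xi$, $V$ persist. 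Keeping the signs correct through the antisymmetrization in $X,Y$ and verifying that every non-$H$ term cancels is where the real work lies; the rest is forced.
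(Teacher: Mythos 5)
Your reduction of $R^{*}$ to $\overline{R}$ via the difference tensor $D(X,Y)=-a(\eta(X)\varphi Y+\eta(Y)\varphi X)$ is sound and is essentially the same content as the paper's expansion of $R^{*}$ (its equation for $R^{*}(X,Y)Z$ obtained by iterating Lemma \ref{7.14}), just organized through the standard curvature-difference formula; that part would go through.

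The genuine gap is in the trace. You contract over a frame $\{e_1,\dots,e_{2m}\}$ of $M^{2m}$ chosen so that $\{e_1,\dots,e_{2m},N\}$ is orthonormal for $\overline{g}$. But the immersion in this lemma is isometric for the metric induced from the Ricci tensor, so the trace in $\mathsf{trace}\,R^{*}(d\psi(\cdot),H)d\psi(\cdot)$ must be taken with respect to that induced metric, and a $\overline{g}$-orthonormal tangent frame is not $\mathsf{Ric}$-orthonormal: since $\mathsf{Ric}=k_2\,\overline{g}+k_1\,\eta\otimes\eta$, one has $\mathsf{Ric}(\xi,\xi)=k_1+k_2=2m$ and $\mathsf{Ric}(X,X)=k_2$ for unit $X\perp\xi$. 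The paper therefore contracts over the rescaled frame $e_\alpha^{*}=e_\alpha/\sqrt{k_2}$, $V^{*}=V/\sqrt{k_2}$, $\xi^{*}=\xi/\sqrt{k_1+k_2}$, and these weights $1/k_2$ and $1/(2m)$ enter the constant: for instance even the pure $\overline{R}$-block, which in your convention would contribute a coefficient independent of $k_2$ (roughly $-\tfrac{c+3}{4}$ times the tangent dimension, corrected by the $\xi$- and $V$-terms $\overline{R}(\xi,H)\xi=-H$, $\overline{R}(V,H)V=-cH$), is in fact weighted by $1/k_2$ on $D\oplus\mathrm{span}\{V\}$ and by $1/(k_1+k_2)$ on $\xi$. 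With the unweighted $\overline{g}$-trace your final coefficient is a different rational function of $c$ and $m$, and the concluding step "matching numerators yields the stated value of $k$" fails; the agreement of your anticipated denominator $k_2^{2}$ with the stated one is coincidental (it would arise from the $a^{2}$-terms alone, not from the frame normalization that actually produces the numerator of $k$). To repair the argument, perform exactly the same expansion but contract with $\mathsf{Ric}^{-1}$, i.e.\ over a $\mathsf{Ric}$-orthonormal tangent frame as above, keeping $\xi$ and $V=-\varphi N$ as distinguished directions.
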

\begin{proof}
We calculate the curvature tensor $R^{\star}$ associted to the connection $\nabla^{\star}$. So, for $X , Y$ and $Z \in \Gamma(T\overline{M}^{2m+1}(c))$ we have
\begin{eqnarray}\label{Ab n}
R^{*}(X, Y)Z&=& \nabla^{*}_X \nabla^{*}_Y Z -\nabla^{*}_Y \nabla^{*}_X Z-\nabla^{*}_{[X,Y]} Z\nonumber\\
&=&\overline{R}(X, Y)Z - \frac{k_1}{k_2}\overline{g}(Z, \overline{\nabla}_X \xi)\varphi Y + \frac{k_1}{k_2}\overline{g}(Z, \overline{\nabla}_Y \xi)\varphi X\nonumber\\
&&+\frac{k_1}{k_2}\big(\overline{g}(X, \overline{\nabla}_Y \xi)-\overline{g}(Y, \overline{\nabla}_X \xi)\big)\varphi Z  \nonumber\\ &&+\frac{k_1}{k_2}\big(\eta(X)\overline{g}(Y,Z)-\eta(Y)\overline{g}(X,Z)\big)\xi\nonumber\\
&&+ (2\frac{k_1}{k_2}+ (\frac{k_1}{k_2})^2)(\eta(Y)\eta(Z)X -\eta(X)\eta(Z)Y),
\end{eqnarray}
where $\overline{R}$ stands for the curvature tensor of a Sasakian space form associated to the Riemannian metric $\overline{g}$. Now, we choose an appropriate orthonormal frame field $\{\xi, V\} \cup\{e_\alpha\}_{\alpha=1}^{2m-1}$ on $\overline{M}^{2m+1}(c)$, in such away that $\{ \xi^{*}, V^{*}\}\cup\{e_\alpha^{*}\}_{\alpha =1}^{2m-1}$, where $e^{*}_\alpha=\frac{1}{\surd k_2}e_\alpha,\ \ \xi^{*}=\frac{1}{\surd (k_1+ k_2)}\xi$ and $ V^{*}= \frac{1}{\surd k_2}V$, on $\overline{M}^{2m+1}(c)$ is an orthonormal frame field as well. Then by the above one obtain
\begin{eqnarray}
\textsf{trace}R^{*}((.), H)(.)&=& \sum_{\alpha=1}^{2m-1} R^{*}(e^{*}_\alpha, H)e^{*}_\alpha + R^{*}(\xi^{*}, H)\xi^{*} + R^{*}(V^{*}, H)V^{*},\nonumber
\end{eqnarray}
such that
\begin{eqnarray}
\sum_{\alpha=1}^{2m-1} R^{*}(e^{*}_\alpha, H)e^{*}_\alpha = \frac{1}{k_2}\sum_{\alpha=1}^{2m-1}\overline{R}(e_\alpha, H)e_\alpha=-\frac{c+3}{4k_2}(2m-1)H,\nonumber
\end{eqnarray}
and
\begin{eqnarray*}
R^{*}(\xi^{*}, H)\xi^{*}=\frac{1}{k_1 + k_2}(\overline{ R}(\xi, H)\xi -((\frac{k_1}{k_2})^2+ 2\frac{k_1}{k_2})H),
\end{eqnarray*}
and
\begin{eqnarray*}
R^{*}(V^{*}, H)V^{*}=\frac{1}{k_2}\big(\overline{R}(V, H)V + 3\frac{k_1}{k_2}\big),
\end{eqnarray*}
where $\overline{ R}(\xi, H)\xi = -H$ and $\overline{ R}(V, H)V = -cH$.
Then put all the above terms together, we obtain the result.
\end{proof}

Now, we can have the principal result.
\begin{thm}\label{7.7}
Let $\psi: M^{2m} \longrightarrow \overline{M}^{2m+1}(c)$ be an isometric immersion of $2m-$ dimensional hypersurface $M^{2m}$ in the Sasakian space form $\overline{M}^{2m+1}(c)$ with the induced metric of the Ricci tensor. Then $M^{2m}$ is a Ricci-biharmonic hypersurface if and only if
\begin{eqnarray}
\left\{
  \begin{array}{ll}
    \hbox{$\Delta^{\perp}H = (|B|^2 +l)H $} \\
     \hbox{A $(\textsf{grad}|H|) -\frac{k_1}{k_2} \eta(\textsf{grad} |H|)V + m |H| \textsf{grad}|H| =0$,\nonumber}
  \end{array}
\right.
\end{eqnarray}
where $l=\frac{-5-27m-2m^2+c^2(7+13m+6m^2)+c(-2+30m+28m^2)}{2(-1+c+3m+cm)^2}$ is constant, $ A$ and $B$ denote  the shape operator and second fundamental form of $M^{2m}$ in $\overline{M}^{2m+1}(c)$, respectively.
\end{thm}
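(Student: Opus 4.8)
The plan is to write the Ricci-bitension field $\tau_2^{\star}(\psi)=-\Delta^{\star}H-\mathsf{trace}\,R^{\star}(d\psi(\cdot),H)d\psi(\cdot)$ out completely, split it into its component tangent to $M^{2m}$ and its component along the unit normal $N$, and then invoke the elementary fact that, $M^{2m}$ being a hypersurface, $\tau_2^{\star}(\psi)=0$ if and only if both components vanish. The two essential reductions are already available: Lemma~\ref{7.20} replaces $\Delta^{\star}H$ by the rough Laplacian $\Delta H$ associated with $\overline{\nabla}$ together with zero-order terms, $\Delta^{\star}H=\Delta H+\big(3\tfrac{k_1}{k_2}+(\tfrac{k_1}{k_2})^{2}\big)H-2\tfrac{k_1}{k_2}\eta(\mathsf{grad}|H|)V$, and Lemma~\ref{7.21} collapses the whole curvature term into $kH$ for the stated scalar $k$. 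Substituting both reductions one gets
\[
\tau_2^{\star}(\psi)=-\Delta H-\Big(3\tfrac{k_1}{k_2}+\big(\tfrac{k_1}{k_2}\big)^{2}+k\Big)H+2\tfrac{k_1}{k_2}\eta(\mathsf{grad}|H|)V,
\]
so everything comes down to decomposing $\Delta H$ into its normal and tangential parts.

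To do that I would use equation~$(\ref{7.16})$ and write $H=|H|N$, so that $A_{H}=|H|A$ and $\nabla^{\perp}_{X}H=X(|H|)N$. Then a direct computation yields $\mathsf{trace}\,B(\cdot,A_{H}\cdot)=|B|^{2}H$, $\mathsf{trace}\,A_{\nabla^{\perp}_{(\cdot)}H}(\cdot)=A(\mathsf{grad}|H|)$ and $\mathsf{trace}\,\nabla_{(\cdot)}A_{H}(\cdot)=A(\mathsf{grad}|H|)+|H|\,\mathsf{trace}(\nabla A)$, whence the normal part of $\Delta H$ equals $-\Delta^{\perp}H+|B|^{2}H$ and its tangential part equals $2A(\mathsf{grad}|H|)+|H|\,\mathsf{trace}(\nabla A)$. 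The one genuinely structural point is the value of $\mathsf{trace}(\nabla A)$. By the Codazzi equation of $M^{2m}\subset\overline{M}^{2m+1}(c)$ one has, for $Z\in\Gamma(TM^{2m})$, $\langle\mathsf{trace}(\nabla A),Z\rangle=Z(\mathsf{trace}A)+\sum_{\alpha}\langle\overline{R}(e_{\alpha},Z)N,e_{\alpha}\rangle$, and the last sum is precisely $\overline{\mathsf{Ric}}(N,Z)$; but $\xi$ is tangent to $M^{2m}$, so $\eta(N)=0$, and by $(\ref{7.1})$
\[
\overline{\mathsf{Ric}}(N,Z)=k_{2}\,\overline{g}(N,Z)+k_{1}\,\eta(N)\eta(Z)=0 .
\]
Hence $\mathsf{trace}(\nabla A)=\mathsf{grad}(\mathsf{trace}A)=2m\,\mathsf{grad}|H|$, which is exactly what keeps all curvature of $\overline{g}$ out of the tangential equation and confines it to the single scalar $l$ of the normal one.

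Now I read off $\tau_2^{\star}(\psi)=0$ componentwise. The $N$-component gives $\Delta^{\perp}H=(|B|^{2}+l)H$, where $l$ is obtained by combining the zero-order term $3\tfrac{k_1}{k_2}+(\tfrac{k_1}{k_2})^{2}$ of Lemma~\ref{7.20} with the constant $k$ of Lemma~\ref{7.21}; substituting $k_{1}=\tfrac{(m+1)(1-c)}{2}$, $k_{2}=\tfrac{(m+1)c+3m-1}{2}$ and the expression for $k$, and simplifying the resulting rational function of $c$ and $m$ (whose denominator is already the $2(-1+c+3m+cm)^{2}$ appearing in $k$), produces precisely the displayed $l$. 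The $TM^{2m}$-component gives $-2\big(A(\mathsf{grad}|H|)+m|H|\,\mathsf{grad}|H|-\tfrac{k_1}{k_2}\eta(\mathsf{grad}|H|)V\big)=0$, which is the second equation of the system. Since the tangential and normal parts of a vector field along $M^{2m}$ are independent, the two equations together are equivalent to $\tau_2^{\star}(\psi)=0$, i.e., by Definition~\ref{7.6}, to $M^{2m}$ being Ricci-biharmonic.

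I expect the main obstacle to be the bookkeeping rather than any conceptual difficulty: tracking the several sign conventions (for $\Delta^{\star}$, for $\Delta^{\perp}$, and for the curvature operator $R^{\star}$) while separating the normal and tangential traces, and then carrying out the polynomial identity that collapses $3\tfrac{k_1}{k_2}+(\tfrac{k_1}{k_2})^{2}$ and $k$ into the single constant $l$. The only step that is not mechanical is the vanishing of $\overline{\mathsf{Ric}}(N,\cdot)$ on $TM^{2m}$ noted above, which is responsible for the clean form $A(\mathsf{grad}|H|)-\tfrac{k_1}{k_2}\eta(\mathsf{grad}|H|)V+m|H|\,\mathsf{grad}|H|=0$ of the tangential equation.
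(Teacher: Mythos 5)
Your proposal takes exactly the paper's route: substitute Lemma \ref{7.20} and Lemma \ref{7.21} into $\tau_2^{\star}(\psi)=0$ and then split into tangential and normal components, with your added details (the decomposition of $\Delta H$ via $(\ref{7.16})$ and the observation that $\mathsf{Ric}(N,Z)=0$ for $Z$ tangent, so that $\mathsf{trace}\,\nabla A=2m\,\mathsf{grad}|H|$) merely spelling out the ``separate the tangent and normal parts'' step the paper leaves implicit. The identification $l=-\big(3\tfrac{k_1}{k_2}+(\tfrac{k_1}{k_2})^{2}+k\big)$ and the tangential equation come out as you describe, so the proof is correct and essentially identical to the paper's.
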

\begin{proof}
Take the definition \ref{7.6} the hypersurface $M^{2m}$ is a Ricci-biharmonic hypersurface if 
\begin{eqnarray*}
0 = \tau_2^{\star}(\psi)
=- \Delta^{\star} H- \textsf{trace} R^{\star}(d\psi(.), H)d\psi(.),
\end{eqnarray*}
then the Lemma $\ref{7.20}$ and Lemma $\ref{7.21}$ follow
\begin{eqnarray*}\label{7.22}
0 &=& \tau_2^{\star}(\psi)\nonumber\\
&=&-\Delta H -(3\frac{k_1}{k_2}+ (\frac{k_1}{k_2})^2)H+ 2 \frac{k_1}{k_2}\eta(grad|H|)V-kH.
\end{eqnarray*}
 Thus, by the above separate the tangent and normal parts we get the result. 
\end{proof}

For the Ricci-biharmonic hypersurfaces with the constant mean curvature the Theorem \ref{7.7} follows
\begin{cor}
Let $M^{2m}$ be a Ricci-biharmonic hypersurface with the constant mean curvature. Then the $\varphi$-sectional curvature satisfies $c\in (-5, \frac{1}{2})$.
\end{cor}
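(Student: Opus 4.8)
The plan is to feed the constant mean curvature hypothesis into Theorem~\ref{7.7} and then extract the restriction on $c$ from the sign of the scalar $l$ alone. First I would note that since $|H|$ is constant, $\mathsf{grad}|H|=0$, so the second equation of the Ricci-biharmonic system in Theorem~\ref{7.7} holds trivially and the first one becomes $\Delta^{\perp}H=(|B|^{2}+l)H$. Because $M^{2m}$ has codimension one, its normal bundle is a line bundle: writing $H=|H|N$ with $N$ a local unit normal and using $\overline{g}(\nabla^{\perp}_{X}N,N)=\frac{1}{2}X\overline{g}(N,N)=0$ in a rank-one bundle, we get $\nabla^{\perp}_{X}N=0$, hence $\nabla^{\perp}_{X}H=X(|H|)N=0$ and $\Delta^{\perp}H=0$. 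Thus a constant mean curvature hypersurface is Ricci-biharmonic if and only if $(|B|^{2}+l)H=0$.

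If $M^{2m}$ is minimal it is automatically Ricci-biharmonic for every value of $c$, so I would assume $|H|\neq0$; then $|B|^{2}+l=0$. A non-minimal hypersurface is not totally geodesic, so $|B|^{2}>0$ (in fact $A_{H}\xi=-|H|V$ from~(\ref{7.18}) already forces $|B|^{2}\geq2$, but mere positivity is all that is needed), and therefore $l=-|B|^{2}<0$.

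It then remains to translate $l<0$ into the asserted interval. The denominator of $l$ appearing in Theorem~\ref{7.7} is $2(-1+c+3m+cm)^{2}=2\big((m+1)c+3m-1\big)^{2}$, which is positive and nonzero — nonvanishing of $(m+1)c+3m-1$ being exactly the hypothesis under which the induced Ricci metric, and hence $\nabla^{*}$, is defined. So $l<0$ is equivalent to negativity of the numerator
\[
N(c,m)=(6m^{2}+13m+7)c^{2}+(28m^{2}+30m-2)c-(2m^{2}+27m+5).
\]
Here $6m^{2}+13m+7=(6m+7)(m+1)>0$, so $N(\cdot,m)$ is an upward parabola in $c$; $N(0,m)=-(2m^{2}+27m+5)<0$, so its two real roots satisfy $r_{1}(m)<0<r_{2}(m)$; and the elementary evaluations $N(-5,m)=8m^{2}+148m+180>0$ and $N\big(\frac{1}{2},m\big)=\frac{1}{4}(54m^{2}-35m-17)>0$ hold for all integers $m\geq1$. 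Since $N(\cdot,m)$ is negative precisely on $(r_{1}(m),r_{2}(m))$, an interval containing $0$ but not $-5$ and not $\frac{1}{2}$, we conclude $(r_{1}(m),r_{2}(m))\subset(-5,\frac{1}{2})$, hence $c\in(-5,\frac{1}{2})$.

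The genuinely delicate step is the last one: one has to be certain that the $m$-dependent negativity interval of $N(\cdot,m)$ always lies inside the fixed window $(-5,\frac{1}{2})$, and this is precisely what the two sign computations at $c=-5$ and $c=\frac{1}{2}$ (uniform in $m\geq1$), combined with $N(0,m)<0$ and the positivity of the leading coefficient, guarantee. Everything before that reduction is a routine specialization of Theorem~\ref{7.7} together with the standard fact that a constant mean curvature hypersurface has parallel mean curvature vector field.
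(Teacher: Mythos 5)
Your argument is correct and is essentially the paper's own: specialize Theorem \ref{7.7} to constant nonzero mean curvature (so that $\Delta^{\perp}H=0$ and hence $|B|^{2}=-l$) and read the restriction on $c$ off the sign of $l$. You merely make explicit the quadratic sign verification (positivity of the numerator of $l$ at $c=-5$ and $c=\frac{1}{2}$ and negativity at $c=0$, uniformly in $m\geq 1$) that the paper asserts without computation.
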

\begin{proof}
Consider the case of Ricci-biharmonic hypersurfaces where the mean curvature vector field is constant $\neq 0$, then the Theorem $\ref{7.7}$ leads $\vert B\vert^2= -l$. So, if $\varphi$-sectional curvature $c\notin (-5, \frac{1}{2})$ then $l  >0$ which is impossible.
\end{proof}

\section{Ricci-biharmonic pseudo Hopf hypersurfaces}
Let $x: M^{2m}\rightarrow \overline{M}^{2m+1}(c)$ be an isometric immersion from a real hypersurface $M^{2m}$ in a Sasakian space form $\overline{M}^{2m+1}(c)$ with the induced metric of the Ricci tensor and the associated connection $\nabla^{\star}$. Let $\overline{\nabla}$ and $\nabla$ denote the Levi-Civita connection on $\overline{M}^{2m+1}(c)$ and $M^{2m}$, respectively. Let $N$ be a local unit normal vector field on $M^{2m}$ in such away that $\xi$ and $V=-\varphi N$ are tangent to $M^{2m}$. Then we have $T(M^{2m})=D \oplus D^{\perp}$, where $D$ is a maximal $\varphi$-invariant distribution and $D^{\perp}= \textsf{Span}\{\xi, V \}$. Suppose that the Weingarten operator $A$ satisfies $A D^\perp \subseteq D^\perp$ and $AD \subseteq D$. A hypersurface $M^{2m}$ is called a pseudo-Hopf hypersurface provided that the Weingarten operator $A$ is invariant on $ \textsf{Span}\{V, \xi\}$ (see \cite{Abedi}).  Also, it was supposed that $W_1 , W_2 \in \textsf{Span} \{\xi, V \}$ are the eigenvectors of the Weingarten operator $A$ as $AW_1=\gamma_1 W_1$ and $AW_2= \gamma_2 W_2$ such that
\begin{eqnarray}\label{6.00}
& W_1=\xi \cos\theta + V \sin\theta,  \ \
& W_2=-\xi \sin\theta+ V \cos \theta,
\end{eqnarray}
 for some $ 0 < \theta <\frac{\pi}{2}$, where $\gamma_1=-\tan \theta$ and $\gamma_2= \cot \theta$. Let, $AV= \alpha \xi+ \beta V$, then we have $\alpha=-1$ and $\beta= \frac{\cos 2\theta}{\cos \theta \sin \theta}$.

\begin{lem}
Let $(M^{2m},g)$ be a hypersurface in the Sasakian space form $(\overline{M}^{2m+1}(c), \overline{g})$ where the ambient manifold is equipped with the induced metric of the tensor Ricci. Then the Codazzi equation holds 
\begin{eqnarray}\label{7.23}
&&g((\nabla_X A)Y-(\nabla_Y A)X, Z)\nonumber\\
&=& -k_2\frac{c-1}{4}\overline{g}\big(\overline{g}(\varphi X, Z)Y + 2\overline{g}(\varphi X, Y)Z- \overline{g}(\varphi Y, Z)X, V\big),
\end{eqnarray}
where  $X, Y$ and $Z$ tangent on $M^{2m}$.
\end{lem}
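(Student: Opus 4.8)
The plan is to specialize the Codazzi equation to the isometric immersion $x\colon (M^{2m},g)\to(\overline M^{2m+1}(c),\mathsf{Ric})$, whose ambient Levi--Civita connection is the connection $\nabla^{*}$ of Lemma \ref{7.14}. First I would decompose $\nabla^{*}$ along $M$: for $X,Y$ tangent write the Gauss formula $\nabla^{*}_X Y=\nabla_X Y+B(X,Y)$, and for the unit normal $N$ the Weingarten formula $\nabla^{*}_X N=-AX$ (there is no normal component because the ambient norm of $N$ is constant). Substituting these into $R^{*}(X,Y)Z=\nabla^{*}_X\nabla^{*}_Y Z-\nabla^{*}_Y\nabla^{*}_X Z-\nabla^{*}_{[X,Y]}Z$ and collecting the part normal to $M$ yields the Codazzi identity $\bigl(R^{*}(X,Y)Z\bigr)^{\perp}=(\nabla_X B)(Y,Z)-(\nabla_Y B)(X,Z)$, equivalently, after pairing with $N$ and using $\nabla g=0$,
\[
g\bigl((\nabla_X A)Y-(\nabla_Y A)X,\,Z\bigr)=\bigl\langle R^{*}(X,Y)Z,\,N\bigr\rangle .
\]
Thus the problem reduces to computing the normal component of $R^{*}(X,Y)Z$ for $X,Y,Z$ tangent to $M$.

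For this I would plug in the explicit expression for $R^{*}$ derived in the proof of Lemma \ref{7.21}, after substituting $\overline\nabla_X\xi=-\varphi X$. Every term there is a multiple of $\overline R(X,Y)Z$, of $\varphi X,\varphi Y,\varphi Z$, of $\xi$, or of $X,Y$. Since $X,Y,Z$ and $\xi$ are tangent to $M$, the $\xi$-, $X$- and $Y$-terms, together with the $\eta$-part and the $\overline g$-part of $\overline R(X,Y)Z$ read off from $(\ref{7.15})$, are tangent to $M$ and contribute nothing to the normal component. What survives is the ``$\varphi$-part'': gathering the $\frac{c-1}{4}$-block of $(\ref{7.15})$ with the $\frac{k_1}{k_2}$-correction in the formula for $R^{*}$, the coefficient of $\varphi Y$ is a fixed constant times $\overline g(\varphi X,Z)$, the coefficient of $\varphi Z$ is twice that constant times $\overline g(\varphi X,Y)$, and the coefficient of $\varphi X$ is minus that constant times $\overline g(\varphi Y,Z)$.

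Finally I would project onto $N$. For any $W$ one has $\overline g(\varphi W,N)=-\overline g(W,\varphi N)=\overline g(W,V)$, since $V=-\varphi N$; hence $(\varphi W)^{\perp}=\overline g(W,V)\,N$. Substituting this into the surviving terms produces exactly $\overline g\bigl(\overline g(\varphi X,Z)Y+2\overline g(\varphi X,Y)Z-\overline g(\varphi Y,Z)X,\,V\bigr)$ times that constant; restoring the ambient-metric normalization $\mathsf{Ric}=k_2\overline g+k_1\,\eta\otimes\eta$ (so that $\mathsf{Ric}(\cdot,N)=k_2\,\overline g(\cdot,N)$ because $\eta(N)=0$) converts it into the coefficient $-k_2\frac{c-1}{4}$ recorded in $(\ref{7.23})$. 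Combined with the Codazzi identity of the first step, this proves the lemma.

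The step I expect to be most delicate is the last one: keeping straight the metric normalizations — the distinction between the $\overline g$-unit normal $N$ and the $\mathsf{Ric}$-unit normal, and the constants $k_1,k_2$ relating $\mathsf{Ric}$ and $\overline g$ — so that all the pieces of the $\frac{k_1}{k_2}$-correction combine with the $\frac{c-1}{4}$-block to leave exactly the stated coefficient and nothing more. Everything else is routine contact-geometry algebra, using only $\varphi^{2}=-\mathrm{Id}+\eta\otimes\xi$, $\eta\circ\varphi=0$, $\overline\nabla_X\xi=-\varphi X$, the Sasakian identity $(\ref{7.9})$, and the skew-symmetry of $\varphi$ with respect to $\overline g$.
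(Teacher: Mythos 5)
Your route is genuinely different from the paper's, and it has a concrete gap at precisely the step you flag as delicate: the $\frac{k_1}{k_2}$-corrections in $R^{*}$ do \emph{not} combine with the $\frac{c-1}{4}$-block to leave the coefficient $-k_2\frac{c-1}{4}$. Substituting $\overline{\nabla}_X\xi=-\varphi X$ into the expression for $R^{*}$ (formula (\ref{Ab n})) gives, for $X,Y,Z$ tangent,
\begin{eqnarray*}
R^{*}(X,Y)Z&=&\overline{R}(X,Y)Z+\frac{k_1}{k_2}\big\{\overline{g}(\varphi X,Z)\varphi Y+2\overline{g}(\varphi X,Y)\varphi Z-\overline{g}(\varphi Y,Z)\varphi X\big\}\\
&&+\ \hbox{(terms proportional to $\xi$, $X$, $Y$, hence tangent to $M^{2m}$)},
\end{eqnarray*}
i.e.\ the correction has exactly the same tensorial shape as the $-\frac{c-1}{4}$-block of (\ref{7.15}) but with coefficient $+\frac{k_1}{k_2}$. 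Since $-\frac{c-1}{4}=\frac{1-c}{4}$ and $\frac{k_1}{k_2}=\frac{(m+1)(1-c)}{2k_2}$ are proportional to the same factor $(1-c)$, they add rather than cancel: projecting onto $N$ and pairing with $\mathsf{Ric}(\cdot,N)=k_2\overline{g}(\cdot,N)$ your computation yields the coefficient $-k_2\frac{c-1}{4}+k_1$, which differs from the stated $-k_2\frac{c-1}{4}$ unless $c=1$. So either your argument produces a different identity than (\ref{7.23}), or the extra $k_1$-term has been silently discarded; as written, the claimed cancellation is asserted but false.

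There is a secondary mismatch of objects: in your set-up $A$ is the Weingarten operator of $\nabla^{*}$ and $\nabla$ is the Levi--Civita connection of the $\mathsf{Ric}$-induced metric on $M^{2m}$, but these are not the $A$ and $\nabla$ used in the rest of the paper (note $\nabla^{*}_X N=-AX+\frac{k_1}{k_2}\eta(X)V$, so the two shape operators already differ on $\xi$), and it is the ordinary $\overline{g}$-shape operator that Lemma \ref{6.32} later feeds into. The paper's own proof avoids $R^{*}$ entirely: it pairs the ordinary curvature $\overline{R}(X,Y)Z$ of (\ref{7.15}) with $N$ in the Ricci metric, so that $\eta(N)=0$ kills the $k_1\eta\otimes\eta$-part and only $k_2\overline{g}(\overline{R}(X,Y)Z,N)$ survives, and then identifies this with the Codazzi expression; the factor $k_2$ in (\ref{7.23}) comes purely from that pairing, not from a modified curvature tensor. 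If you want to argue through $\nabla^{*}$ and $R^{*}$, you must either carry the extra $k_1$-term honestly (and then reconcile it with the statement) or explain why the shape operator and induced connection you use coincide with the paper's; neither is done in the proposal.
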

\begin{proof}

 Let $N$ be a local unit normal vector field on $M^{2m}$ in the Sasakian space form $\overline{M}^{2m+1}(c)$ and $\xi\in \Gamma(TM^{2m})$, then from the equations $(\ref{7.15})$ and $(\ref{7.1})$ we have
\begin{eqnarray*}
\mathsf{Ric}(\overline{R}(X,Y)Z, N)&=&k_1\eta(\overline{R}(X,Y)Z)\eta(N)+k_2\overline{g}(\overline{R}(X,Y)Z, N)\nonumber\\
&=& -k_2\frac{c-1}{4}\overline{g}\big(\overline{g}(\varphi X, Z)Y + 2\overline{g}(\varphi X, Y)Z- \overline{g}(\varphi Y, Z)X, V\big) ,\nonumber\\
&=&g((\nabla_X A)Y-(\nabla_Y A)X, Z),
\end{eqnarray*}
where $\overline{R}$ is the curvature tensor of the ambient manifold.
\end{proof}
\begin{lem}\label{6.32}
Let $M^{2m}$ be a pseudo Hopf hypersurface in the Sasakian space form $\overline{M}^{2m+1}(c)$. If the Weingarten operator $A$ for some $X\in D$ satisfies $AX=\lambda X$, then $\varphi X$ is an eigenvector of the Weingarten operator corresponding to the eigenvalue in such a way that
\begin{eqnarray}
A\varphi X=\frac{\beta \lambda +2-n}{2\lambda-\beta}\varphi X,
\end{eqnarray}
\end{lem}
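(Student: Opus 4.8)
The plan is to exploit the Codazzi equation $(\ref{7.23})$ together with the structural identities for a pseudo-Hopf hypersurface, in the same spirit as classical arguments for Hopf hypersurfaces in complex space forms. First I would fix $X\in D$ with $AX=\lambda X$ and $g(X,X)=1$, and apply the Codazzi identity with the triple $(X,\varphi X, \varphi X)$ (or, symmetrically, with $(X,\varphi X, N)$-type slots rephrased on $M^{2m}$). Since $\varphi X\in D$ as well (because $D$ is $\varphi$-invariant), the terms on the left become $g((\nabla_X A)\varphi X-(\nabla_{\varphi X}A)X,Z)$, while the right-hand side simplifies using $\overline{g}(\varphi X,\varphi X)=1$, $\overline{g}(\varphi X,X)=0$ and $\overline{g}(X,V)=\overline{g}(\varphi X,V)=0$ for $X\in D$. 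The key structural input is $\overline{\nabla}_Y\xi=-\varphi Y$ together with $(\ref{7.9})$, which lets me compute $\nabla_X\varphi X$, $\nabla_{\varphi X}X$, and the normal components $B(X,\varphi X)$, $B(\varphi X,\varphi X)$ in terms of $A$, $\alpha$, $\beta$, and $\theta$; in particular $AV=\alpha\xi+\beta V$ with $\alpha=-1$ is used to handle the $D^{\perp}$-components that appear.

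Next I would expand $(\nabla_X A)\varphi X=\nabla_X(A\varphi X)-A(\nabla_X\varphi X)$ and $(\nabla_{\varphi X}A)X=\nabla_{\varphi X}(\lambda X)-A(\nabla_{\varphi X}X)$, pair the result with a suitably chosen test vector $Z$ — first $Z=\varphi X$ to extract the eigenvalue relation, then $Z\in D$ orthogonal to $X,\varphi X$ to show $A\varphi X$ has no component off $\mathrm{Span}\{X,\varphi X\}$, and finally $Z=\xi,V$ to kill the $D^{\perp}$-components. Writing $A\varphi X=\mu\varphi X+(\text{lower-order terms that vanish})$, collecting the coefficient of $\varphi X$, and using the appearance of $\varphi^2 X=-X$ from $(\ref{7.9})$ produces a scalar equation of the form $(2\lambda-\beta)\mu=\beta\lambda+2-n$, where $n=2m+1$ enters through the trace/mean-curvature bookkeeping (the "$2-n$" is exactly $-(n-2)$ coming from the $\varphi$-invariant block contributing $n-2$ summands to the relevant trace). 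Solving for $\mu$ gives
\begin{eqnarray*}
A\varphi X=\frac{\beta\lambda+2-n}{2\lambda-\beta}\,\varphi X,
\end{eqnarray*}
which is the assertion.

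I expect the main obstacle to be the careful computation of $\nabla_X\varphi X$ and $\nabla_{\varphi X}X$ and the separation of their tangential and $D^{\perp}$-parts: one must use $(\ref{7.9})$ to write $(\overline{\nabla}_X\varphi)\varphi X=g(X,\varphi X)\xi-g(\varphi X,\xi)X=0$ and similar identities, then feed in the Weingarten formula and the pseudo-Hopf hypotheses $AD\subseteq D$, $AD^{\perp}\subseteq D^{\perp}$, $AV=-\xi+\beta V$ to control every term. A secondary subtlety is that one must verify $2\lambda-\beta\neq0$ for the division to be legitimate; this can be argued by noting that if $2\lambda=\beta$ then the scalar equation forces $\beta\lambda+2-n=0$ as well, and checking that the resulting system is incompatible with $0<\theta<\pi/2$ (via $\beta=\cos 2\theta/(\cos\theta\sin\theta)$), or else treating that degenerate case separately. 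Once the algebra is organized around these identities, the eigenvalue formula falls out directly.
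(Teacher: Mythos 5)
Your overall instinct---play the Codazzi equation (\ref{7.23}) against the pseudo-Hopf relation $AV=-\xi+\beta V$---is the right family of ideas, but two points in your plan are genuinely off. First, the constant $n$ is not the ambient dimension: in the paper it is defined immediately after the lemma as $n=-k_2\frac{c-1}{2}$, and the ``$-n$'' in $\beta\lambda+2-n$ is produced by the curvature term on the right-hand side of (\ref{7.23}), namely $-k_2\frac{c-1}{4}\cdot 2\,\overline{g}(\varphi X,Y)\,\overline{g}(V,V)$ when the third slot is $V$. No trace or mean-curvature bookkeeping occurs anywhere in the argument, so your explanation that $2-n=-(n-2)$ counts $n-2$ summands of the $\varphi$-invariant block is not a cosmetic slip: a derivation organized that way would never produce the curvature contribution that actually constitutes $-n$.

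Second, your main computational route would not close. Pairing the Codazzi identity for the pair $(X,\varphi X)$ with the test vector $Z=\varphi X$ forces you to expand $\nabla_X(A\varphi X)$ and $A(\nabla_X\varphi X)$; the quantities $\nabla_X\varphi X$ and $\nabla_{\varphi X}X$ are not determined by the hypotheses (they involve unknown connection coefficients), and what survives is a differential relation among eigenvalue functions (terms like $X(\mu)$ and $\varphi X(\lambda)$), not the algebraic formula you want. The relation actually comes from the slot you relegate to ``killing the $D^{\perp}$-components'': differentiate $AV=-\xi+\beta V$ along $X$ and along an arbitrary $Y\in D$, using $\nabla_X V=\varphi AX$ and $\nabla_X\xi=-\varphi X$ (the only covariant derivatives the structure does determine), pair with $Y$ resp.\ $X$, use the symmetry of $\nabla_X A$ to rewrite $g((\nabla_XA)V,Y)=g((\nabla_XA)Y,V)$, antisymmetrize in $X,Y$, and only then insert (\ref{7.23}) with third slot $V$. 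This yields $(2\lambda-\beta)\,g(A\varphi X,Y)=\bigl(\beta\lambda+2+k_2\frac{c-1}{2}\bigr)g(\varphi X,Y)$ for all $Y\in D$, which together with $AD\subseteq D$ gives the stated proportionality. Your caution that $2\lambda-\beta\neq0$ must be checked before dividing is legitimate---the paper divides without comment---but it does not repair the two gaps above.
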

where $n=-k_2\frac{c-1}{2}$.
\begin{proof}
Let $X, Y\in D$ be the eigenvectors of the Weingarten operator $A$. We consider $AV= -\xi+ \beta V$ and take the covariant derivative of both sides of this, then 
\begin{eqnarray*}
(\nabla_X A)V+ A\nabla_X V= X(\beta)V+ \beta \nabla_X V +\varphi X,
\end{eqnarray*}
where $\nabla$ denotes the Levi-Civita connection on $M^{2m}$ and $\nabla_X V=\varphi A X$. Hence, we get
\begin{eqnarray*}\label{6.18}
g((\nabla_X A)V, Y)+ g(A\varphi AX, Y)= \beta g(\varphi AX, Y)+g(\varphi X, Y),
\end{eqnarray*}
similarly
\begin{eqnarray*}\label{6.19}
g((\nabla_Y A)V, X)+ g(A\varphi AY, X)= \beta g(\varphi AY, X)+g(\varphi Y, X).
\end{eqnarray*}
Since $\varphi$ is skew-symmetric, therefore 
\begin{eqnarray*}
g((\nabla_X A)Y- (\nabla_Y A)X, V)+2g(A\varphi AX, Y)=\nonumber\\
\beta g(\varphi AX, Y) +\beta g(A\varphi X, Y)+2g(\varphi X, Y),
\end{eqnarray*}
then, from the Codazzi equation in  $(\ref{7.23})$ we obtain
\begin{eqnarray*}
-k_2\frac{c-1}{2}g(\varphi X, Y)+2g(A\varphi AX, Y)=\nonumber\\
\beta g(\varphi AX, Y)+\beta g(A\varphi X, Y)+2g(\varphi X, Y),\nonumber
\end{eqnarray*}
consequently, we have
\begin{eqnarray*}
(2\lambda-\beta)g(A\varphi X, Y)=(\beta \lambda +2 + k_2\frac{c-1}{2})g(\varphi X,Y),
\end{eqnarray*}
as it was claimed.
\end{proof}
In the rest of this section the Ricci-biharmonic pseudo Hopf hypersurfaces discuss in more details.
\begin{thm}
There exists no Ricci-biharmonic pseudo Hopf hypersurface, where $\textsf{grad}|H|$ is a principal direction.
\end{thm}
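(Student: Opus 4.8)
The plan is to work on the open set $U\subseteq M^{2m}$ on which $\textsf{grad}|H|\neq 0$. If $U=\emptyset$ then $|H|$ is constant and $\textsf{grad}|H|$ is not a principal direction, so the hypothesis forces $U\neq\emptyset$; shrinking $U$ we may also assume $H\neq 0$ there, since otherwise $M$ is minimal and $\textsf{grad}|H|\equiv 0$ again. Everything below takes place on $U$, where $|H|$ is smooth and non-constant.

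First I would read off the content of the tangential equation of Theorem \ref{7.7}. Set $E_1:=\textsf{grad}|H|/|\textsf{grad}|H||$ and use the hypothesis $AE_1=\lambda_1E_1$. For a pseudo Hopf hypersurface $A$ has eigenvalues $\gamma_1=-\tan\theta$ and $\gamma_2=\cot\theta$ on $D^{\perp}$, so $\gamma_1\gamma_2=-1$, and since $AV=-\xi+\beta V$ the vector $V$ is not an eigenvector of $A$. Hence $E_1\neq\pm V$, the vectors $E_1$ and $V$ are linearly independent, and after dividing the tangential equation by $|\textsf{grad}|H||$ it reads $(\lambda_1+m|H|)E_1=\tfrac{k_1}{k_2}\eta(E_1)V$; comparing the two sides (with $c\neq 1$, so $k_1\neq 0$; the degenerate case $c=1$, where $\nabla^{*}=\overline{\nabla}$, is treated by a parallel argument) yields
\[
\lambda_1=-m|H|,\qquad \eta(\textsf{grad}|H|)=0 .
\]
Since $A$ preserves the splitting $D\oplus D^{\perp}$ and $E_1\perp\xi$, the $D^{\perp}$-component of $E_1$ would have to be a $V$-eigenvector of $A$ and therefore vanishes, so $E_1\in D$; Lemma \ref{6.32} then gives that $\varphi E_1\in D$ is principal, $A\varphi E_1=\delta\,\varphi E_1$, with $\delta$ the explicit function of $\lambda_1=-m|H|$ and $\theta$ appearing there.

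Next I would fix a local orthonormal principal frame $\{E_1,\varphi E_1,E_3,\dots,E_{2m-2},W_1,W_2\}$ adapted to $D\oplus D^{\perp}$ and substitute its vectors into the Codazzi equation $(\ref{7.23})$. Two features make this manageable: $\varphi W_a$ is a multiple of $N$, hence normal, so every term of the right-hand side of $(\ref{7.23})$ pairing a $\varphi W_a$ against a tangent vector disappears; and $E_i(|H|)=0$ for $i\neq 1$, because $\textsf{grad}|H|$ is parallel to $E_1$. The triples $(E_1,E_i,E_i)$ then give $g(\nabla_{E_i}E_1,E_i)=E_1(\lambda_i)/(\lambda_1-\lambda_i)$ for the $D$-directions (and the analogous identity with $\gamma_a$ in place of $\lambda_i$), while triples of three distinct frame vectors control the remaining rotation coefficients. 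Together with $\textsf{trace}\,A=2m|H|$ (so $\sum_{i\geq 3}\lambda_i=3m|H|-\delta-\beta$) and the pairing of $D$-eigenvalues coming from Lemma \ref{6.32}, I expect this to reduce the geometry on $U$ to the single function $|H|$ along the integral curves of $E_1$.

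The last ingredient is the normal part of Theorem \ref{7.7}. The normal bundle is a line and $\nabla^{\perp}N=0$, so $\Delta^{\perp}H=(\Delta|H|)N$, and the equation becomes $\Delta|H|=(|B|^2+l)|H|$ with $|B|^2=m^2|H|^2+\delta^2+\gamma_1^2+\gamma_2^2+\sum_{i\geq 3}\lambda_i^2$. Computing $\Delta|H|$ in the adapted frame — only $E_1$-derivatives survive, by the previous step — converts this into a single ordinary differential relation along $E_1$ among $|H|$, $E_1(|H|)$ and $E_1(E_1(|H|))$, with coefficients built from $m$ and $c$. The main obstacle, and the heart of the proof, is to show that this relation cannot be satisfied: feeding in $E_1(\lambda_1)=-mE_1(|H|)$, the explicit formula for $\delta$, the bound $\gamma_1^2+\gamma_2^2=\beta^2+2\geq 2$ forced by $\gamma_1\gamma_2=-1$ (whence $|B|^2\geq m^2|H|^2+2$), and the fact that the constant $l$ is positive unless $c\in(-5,\frac{1}{2})$ (the Corollary following Theorem \ref{7.7}), one should deduce that $\textsf{grad}|H|$ vanishes identically on $U$, contradicting the choice of $U$. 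Hence no Ricci-biharmonic pseudo Hopf hypersurface with $\textsf{grad}|H|$ a principal direction exists.
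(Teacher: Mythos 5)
Your setup is sound and matches the paper's: from the tangential equation of Theorem \ref{7.7} you correctly extract $\lambda_1=-m|H|$ and $\eta(\textsf{grad}|H|)=0$ (the paper simply assumes $\textsf{grad}|H|\in D$, so your derivation of this via the non-principality of $V$ is a small improvement), and Lemma \ref{6.32} then pairs $e_1$ with $\varphi e_1$. But the decisive step is missing. You explicitly defer it: ``the main obstacle, and the heart of the proof, is to show that this relation cannot be satisfied \dots one should deduce that $\textsf{grad}|H|$ vanishes identically on $U$.'' Nothing in your sketch actually produces that deduction. The normal equation $\Delta|H|=(|B|^2+l)|H|$ together with $|B|^2\geq m^2|H|^2+2$ and the sign of $l$ does not force $|H|$ to be constant on a local open set: there is no maximum principle available on $U$, the constant $l$ is only known to be positive when $c\notin(-5,\tfrac12)$ (so for $c$ in that interval your inequality gives nothing), and a second-order relation along the integral curves of $e_1$ with nonconstant coefficients generically has nonconstant local solutions. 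So as written the proposal is a plan, not a proof, and the plan's endgame is unsubstantiated.

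The paper closes the argument by a quite different, purely structural mechanism that never touches the normal equation. From $e_i(|H|)=0$ for $i\geq 2$ and the Codazzi relations it deduces $\omega_{ij}^1=0$ for $i\neq j$, $i,j\geq 2$; it then computes $\nabla_{e_m}W_1$ and $\nabla_{e_m}W_2$ directly, using the Sasakian identities $\nabla_X\xi=-\varphi X$ and $\nabla_X V=\varphi AX$ with $e_m=\varphi e_1$. The vanishing of the two connection forms $\omega_{m\,2m}^1$ and $\omega_{m\,2m-1}^1$ forces simultaneously $\overline{\lambda_1}=-\tan\theta$ and $\overline{\lambda_1}=\cot\theta$, which is impossible for $0<\theta<\tfrac{\pi}{2}$; this is the contradiction. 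If you want to salvage your route you would have to carry out the ODE analysis in full, which is exactly the long computation the paper's contact-geometric argument is designed to avoid; as it stands, your proof has a genuine gap at its central claim.
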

\begin{proof}
 Let $\psi: M^{2m}\rightarrow \overline{M}^{2m+1}(c)$ be an isometric immersion of a Ricci-biharmonic pseudo Hopf hypersurface in the Sasakian space form $\overline{M}^{2m+1}(c)$. Suppose that \textsf{grad}$|H|$ is in the direction of vectors in $D$. Then, by applying Theorem $\ref{7.7}$ directly, we get
\begin{eqnarray}
\left\{
  \begin{array}{ll}
     \hbox{$\Delta^{\perp}H = (|B|^2 +L)H$,} \\
    \hbox{A $\textsf{grad}|H| + m |H|\textsf{grad} |H| =0$,\nonumber}
  \end{array}
\right.
\end{eqnarray}
in other words in this case grad$|H|$ is an eigenvector of the Weingarten operator $A$ corresponding to the eigenvalue $-m|H|$. 
The Lemma $\ref{6.32}$ yields we can choose a suitable orthogonal frame field $\big{\{}e_1, ...,e_{m-1}, e_m=\varphi e_1, ...,e_{2m-2}= \varphi e_{m-1}, e_{2m-1}=W_1, e_{2m}=W_2 \big{\}}$ on $M^{2m}$ in which
\begin{eqnarray}\label{6.21}
&&Ae_i= \lambda_i e_i, \ \ i=1, . . ., m-1\nonumber\\
&&A\varphi e_i= \overline{\lambda_i}\varphi e_i,  \ \ i= 1, ..., m-1\\
&&AW_1=- \gamma_1 W_1 ,\ \ \  AW_2=\gamma_2 W_2\nonumber
\end{eqnarray}
where, $\lambda_i $ and $\overline{\lambda_i}= \frac{\beta \lambda_i +2- n}{2\lambda_i-\beta}$ are the eigenvalues corresponding to the eigenvectors $e_i$ and $\varphi e_i$, respectively. We recall that $\gamma_1=-\tan \theta $ and $\gamma_2=\cot \theta $, consequently we get $\gamma_1 \gamma_2=-1$.
Let $e_1=\frac{ \textsf{grad}|H|}{|\textsf{grad}|H||}$ such that \textsf{grad}$|H|$ is given by
$ \textsf{grad}|H|= \sum_{i=1} ^{2m} e_i(|H|)e_i$.
 Then
 \begin{eqnarray}\label{6.22}
 {e_1(|H|)\neq 0, \ \ \ \ \ \ \ \ \ e_i(|H|)=0, \ \ \ \ \ \ \ \ \ i=2, ..., 2m. }
\end{eqnarray}
 Also, it is  written
\begin{eqnarray}\label{6.23}
& \nabla_{e_i} e_j =\sum_{k=1} ^{2m} \omega_{ij} ^ke_k,
\end{eqnarray}
where $\nabla$ denotes the Levi-Civita connection on $M^{2m}$ and $\omega_{ij} ^k$ is known the connection forms. Computing the compatibility condition $\nabla_{e_k} \langle e_i, e_j\rangle =0,$ yields
\begin{eqnarray*}
\omega_{ki}^i &=& 0,\ \  i = j  \label{6.25}\\
\omega_{ki}^j+\omega_{kj}^i &=0& , \ \ i\neq j, \ \  i,j,k = 1,...,2m.\label{6.26}
\end{eqnarray*}
Moreovere, take $(\ref{6.21})$ and $(\ref{6.23})$ the Codazzi equation implies
 \begin{eqnarray}
 e_k(\lambda_i)e_i +(\lambda_i -\lambda_j)\omega_{ki}^je_j=e_i(\lambda_k)e_k +(\lambda_k-\lambda_j)\omega_{ik}^je_j,\nonumber
 \end{eqnarray}
which follows
 \begin{eqnarray}
e_i(\lambda_j)&=&(\lambda_i- \lambda_j)\omega_{ji}^j\label{6.27} \\
(\lambda_i- \lambda_j)\omega_{ki}^j&=&(\lambda_k -\lambda_j)\omega_{ik}^j,\label{6.28}
 \end{eqnarray}
 for distinct $i,j,k=1,...,2m$. From $\lambda_1=-m|H|$ and $(\ref{6.22})$ we obtain
\begin{eqnarray}
e_1(\lambda_1) \neq 0, \ \ \ \ \ \ \ \  e_i(\lambda_1)=0,\ \ \ \ \ i=2,...,2m,\label{6.29}
\end{eqnarray}
and
\begin{eqnarray}
 0=[e_i, e_j]\lambda_1 = (\nabla_{e_i} e_j - \nabla_{e_j} e_i)\lambda_1,\ \ \ 2 \leq i,j \leq 2m , i \neq j.
\end{eqnarray}
Thus
\begin{eqnarray}
&\omega_{ij}^1 =\omega_{ji}^1, \label{6.30}	
\end{eqnarray}
 for distinct $i,j=2,..., 2m$.
It is claimed that, $\lambda_j \neq \lambda_1$ for $j=2,..., 2m$. Since, if $\lambda_j =\lambda_1$ for $j\neq 1$, utilize $(\ref{6.27})$ and put $i=1$
\begin{eqnarray*}
0= (\lambda_1- \lambda_j)\omega_{j1} ^j=e_1(\lambda_j)=e_1(\lambda_1),
\end{eqnarray*}
which contradicts $(\ref{6.29})$. For $j=1$ and $ k, i\neq 1$ the equation $(\ref{6.28})$ implies
\begin{eqnarray*}
 (\lambda_i -\lambda_1)\omega_{ki}^1=(\lambda_k - \lambda_1)\omega_{ik}^1,
\end{eqnarray*}
then by the above and $(\ref{6.30})$ it follows
\begin{eqnarray}
& \omega_{ij}^1 =0, \ \ \ \ \ \ \ \ \ \ i\neq j, \ \ \ \ \ \ \ i,j= 2,... ,2m ,\label{6.31}
\end{eqnarray}
take $(\ref{6.26})$, we get $\omega_{i1}^j=0$, for  $i\neq j$ , $i,j= 1,...,2m$.
Now to continue our approach we compute
\begin{eqnarray}\label{6.41}
\nabla_{e_m} W_2 &=& \nabla_{e_m} (-\xi \sin\theta +V \cos \theta)\nonumber \\
&=& -e_m(\sin \theta)\xi - \sin (\theta) \nabla_{e_m} \xi + e_m(\cos \theta)V + \cos(\theta) \nabla_{e_m} V\nonumber \\
&=&-e_m(\sin \theta)(W_1 \sin \theta + W_2 \cos \theta)- \sin \theta (-\varphi e_m)\nonumber \\
 &+& e_m(\cos \theta)(W_1 \cos \theta - W_2 \sin \theta) + \cos \theta(\varphi Ae_m)\nonumber \\
&=&(- e_m (\sin \theta)\sin \theta +e_m(\cos \theta)\cos\theta)W_1 -(\sin \theta + \overline{ \lambda_1}\cos \theta)e_1\nonumber \\
&-&(e_m (\sin \theta ) \cos \theta +e_m (\cos \theta)\sin \theta)W_2.
\end{eqnarray}
Moreover, from $(\ref{6.31})$ the connection form $\omega_{m 2m }^1=0 $. Then $(\ref{6.41})$ follows
\begin{eqnarray*}
& 0= \omega_{m 2m }^1= \sin \theta + \overline{\lambda_1}\cos \theta,
\end{eqnarray*}
which yields
\begin{eqnarray}\label{2.12}
\overline{\lambda_1} = -\tan \theta,
\end{eqnarray}
where $\overline{\lambda}_1$ is an eigenvalue of the Weingarten operator corresponding to the eigenvector $e_m=\varphi e_1$. Similarly, we compute $\nabla_{e_m} W_1$ and apply $(\ref{6.31})$ then
\begin{eqnarray*}
0= \omega_{m 2m-1}^1= \cos \theta -\overline{\lambda_1}\sin \theta, \ \
\end{eqnarray*}
 which yields
\begin{eqnarray}\label{2.13}
\overline{\lambda_1}= \cot \theta.
\end{eqnarray}
Finally, $(\ref{2.12})$ and $(\ref{2.13})$ show a contradiction. Hence, Ricci-biharmonic pseudo Hopf hypersurfaces do not exist in the Sasakian space forms where \textsf{grad}$|H|$ is an eigenvector of the Weingarten operator.
\end{proof}
In the remainder of the studying Ricci-biharmonic pseudo Hopf hypersurfaces are considered, where \textsf{grad}$|H|\in D^{\perp}$.
\begin{prop}
Let $M^{2m}$ be a Ricci-biharmonic pseudo Hopf hypersurface where \textsf{grad}$|H|=\alpha \xi+ \beta V$. Then 
\begin{itemize}
  \item $M^{2m}$ is a hypersurface with the constant mean curvature, if $\alpha=0$.
  \item  does not exist any Ricci-biharmonic pseudo Hopf hypersurface, where $\alpha \neq 0$ and the dimensional of hypersurface is big enough for the $\varphi$-sectional curvature $c>1$.
  \item Ricci-biharmonic pseudo Hopf hypersurface has the mean curvature that holds
  \begin{eqnarray*}
|H|=\frac{1}{2} (-\frac{-1 + m (\gamma_1 + \gamma_2)}{
    m^2} \pm
   \sqrt{\frac{-\frac{
     4 (-1 + c) m^2 (1 + m)}{(-1 + c + 3 m + c m)} + (-1 +
      m (\gamma_1+\gamma_2))^2}{m^4}}),
\end{eqnarray*}
for $\alpha \neq 0$ and the eigenvalues $(\gamma_1 + \gamma_2)\neq 0$ 
where $\varphi$-sectional curvature $c < \frac{(1 - 3 m)}{(1 + m)}$ or $1<c$.
\end{itemize}
\end{prop}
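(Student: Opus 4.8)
The plan is to restrict the Ricci-biharmonic system of Theorem~\ref{7.7} to the pseudo Hopf setting, exploiting that the shape operator $A$ leaves $D^{\perp}=\mathsf{Span}\{\xi,V\}$ invariant. First I would record the action of $A$ on this plane: equation~$(\ref{7.18})$ gives $A\xi=-V$, while the relation $AV=-\xi+\frac{\cos 2\theta}{\sin\theta\cos\theta}V$ stated before Lemma~\ref{6.32} can be rewritten $AV=-\xi+(\gamma_1+\gamma_2)V$, because $\gamma_1+\gamma_2=-\tan\theta+\cot\theta=\frac{\cos 2\theta}{\sin\theta\cos\theta}$. Writing $\mathsf{grad}|H|=\alpha\xi+\beta V$ one also has $\eta(\mathsf{grad}|H|)=\overline g(\xi,\mathsf{grad}|H|)=\alpha$, since $\xi\perp V$.

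Next I would substitute $\mathsf{grad}|H|=\alpha\xi+\beta V$ into the tangential part
\[
A(\mathsf{grad}|H|)-\frac{k_1}{k_2}\,\eta(\mathsf{grad}|H|)\,V+m|H|\,\mathsf{grad}|H|=0
\]
of the system, using $A(\alpha\xi+\beta V)=-\beta\xi+\bigl(-\alpha+\beta(\gamma_1+\gamma_2)\bigr)V$, and split the identity into its $\xi$- and $V$-components. The $\xi$-component yields a relation of the form $\beta=m|H|\,\alpha$, and the $V$-component a second scalar relation among $\alpha,\beta,|H|,\gamma_1+\gamma_2$ and $k_1/k_2$. Should these two relations not suffice, I would also invoke the normal equation $\Delta^{\perp}H=(|B|^2+l)H$ together with the expression of $|B|^2$ in terms of the eigenvalues $\gamma_1,\gamma_2,\lambda_i,\overline{\lambda_i}$.

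The first bullet then follows at once: if $\alpha=0$, the $\xi$-component relation forces $\beta=0$, so $\mathsf{grad}|H|\equiv0$ and $|H|$ is constant. For the remaining two bullets assume $\alpha\neq0$. Eliminating $\beta=m|H|\alpha$ from the $V$-component relation and cancelling $\alpha$ produces a quadratic in $|H|$; I would make its coefficients explicit by inserting $k_1=\frac{(m+1)(1-c)}{2}$ and $k_2=\frac{(m+1)c+3m-1}{2}$ (in particular $1+\frac{k_1}{k_2}=\frac{2m}{k_2}$, which is how the factor $-1+c+3m+cm$ enters), and the quadratic formula then gives the displayed value of $|H|$, meaningful exactly when $k_2\neq0$, i.e. $c\neq\frac{1-3m}{1+m}$, and the expression under the radical is non-negative --- the stated range for $c$. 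For the non-existence statement I would examine the sign of that discriminant when $c>1$: there the term $\frac{4(c-1)m^2(1+m)}{-1+c+3m+cm}$ is positive and asymptotically of size $\frac{4(c-1)}{c+3}m^2$, so for $m$ large it dominates $(-1+m(\gamma_1+\gamma_2))^2$, the discriminant becomes negative, there is no real $|H|$, and hence no such hypersurface.

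The step I expect to be most delicate is the sign bookkeeping in passing from the tangential equation to the quadratic --- keeping straight the contributions of $\frac{k_1}{k_2}$ and of the off-diagonal term $AV=-\xi+(\gamma_1+\gamma_2)V$ --- and, for the non-existence claim, making the asymptotic comparison in the discriminant precise enough to identify the threshold on the dimension for a given $c>1$; one must also check that the eigenvalue combination $\gamma_1+\gamma_2$ cannot itself grow with $m$ in a way that would restore a real root, which is where the auxiliary relations (the Codazzi equation $(\ref{7.23})$, Lemma~\ref{6.32}, and the normal biharmonic equation) are likely needed.
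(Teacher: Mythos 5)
Your overall route is the paper's own: restrict the tangential equation of Theorem~\ref{7.7} to $D^{\perp}$, use $AV=-\xi+(\gamma_1+\gamma_2)V$ (the paper's~(\ref{7.25})) together with~(\ref{7.18}), split into $\xi$- and $V$-components to get $\beta=m|H|\alpha$ plus one more scalar relation, and then study the resulting quadratic in $|H|$ and its discriminant; the first bullet ($\alpha=0\Rightarrow\beta=0\Rightarrow|H|$ constant) is handled exactly as you say. The auxiliary tools you keep in reserve (the normal equation, the Codazzi relation~(\ref{7.23}), Lemma~\ref{6.32}) are not used in the paper: the two scalar relations suffice, and your worry about $\gamma_1+\gamma_2$ growing with $m$ is disposed of by reading off from the quadratic~(\ref{7.27}) that $\gamma_1+\gamma_2<\tfrac1m$, so that for large $m$ the linear coefficient is negligible while for $c>1$ the constant term $-k_1/k_2$ is positive, leaving no admissible root with $|H|^2\ge 0$.

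The one concrete divergence is in the $V$-component, and it matters for the displayed formula. With your (consistent) reading $A\xi=-V$ you would obtain $-\alpha\bigl(1+\tfrac{k_1}{k_2}\bigr)+\beta\bigl((\gamma_1+\gamma_2)+m|H|\bigr)=0$ and hence the quadratic $m^{2}|H|^{2}+m(\gamma_1+\gamma_2)|H|-\bigl(1+\tfrac{k_1}{k_2}\bigr)=0$. The paper's~(\ref{7.26}) instead reads $-\alpha\bigl(|H|+\tfrac{k_1}{k_2}\bigr)+\beta\bigl((\gamma_1+\gamma_2)+m|H|\bigr)=0$, i.e.\ it feeds in~(\ref{7.18}) in the form $A_H\xi=-|H|V$, and this leads to $m^{2}|H|^{2}+\bigl(m(\gamma_1+\gamma_2)-1\bigr)|H|-\tfrac{k_1}{k_2}=0$. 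Only the latter reproduces the root formula stated in the proposition: the factor $-1+c+3m+cm$ there enters through $\tfrac{k_1}{k_2}$ itself (its denominator is $2k_2$), not through $1+\tfrac{k_1}{k_2}=\tfrac{2m}{k_2}$ as you anticipated. So, carried out literally, your computation ends in a different quadratic, a different expression for $|H|$, and a different admissible range of $c$; before claiming the displayed value you must resolve this clash between the statement of Theorem~\ref{7.7} (where $A$ is the shape operator) and the way the paper's proof actually uses $A_H$ on $\xi$ but $A$ on $V$.
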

\begin{proof}
Take the assumption $\textsf{grad}|H|=\alpha \xi+ \beta V$ then apply the Theorem $\ref{7.7}$ follows 
\begin{eqnarray}\label{7.24}
\left\{
  \begin{array}{ll}
     \hbox{$\Delta^{\perp}H= (|B|^2+ l)H$ ,} \\
     \hbox{A $(\alpha \xi+ \beta V)-\frac{k_1}{k_2}\alpha V + m |H|(\alpha \xi + \beta V)= 0$}.
  \end{array}
\right.
\end{eqnarray}
 By the above and $(\ref{6.00})$ we have
\begin{eqnarray*}
AW_1=\gamma_1W_1
=\cos \theta A\xi+\sin \theta AV,
\end{eqnarray*}
so 
\begin{eqnarray}\label{7.25}
AV= -\xi+(\gamma_1+ \gamma_2)V,
\end{eqnarray}
then, the second term of the equations $(\ref{7.24})$ yields 
\begin{eqnarray}
\left\{
  \begin{array}{ll}
  \hbox{$-\alpha(|H|+\frac{k_1}{k_2})+\beta((\gamma_1 + \gamma_2)+m|H|)=0$} \\
 \hbox{$-\beta +m|H|\alpha=0$}\label{7.26}
  \end{array}
\right.
\end{eqnarray}
hence
\begin{eqnarray}\label{7.27}
\alpha m^2\big(|H|^2+\frac{(\gamma_1 + \gamma_2)m-1}{m^2}|H|-\frac{(m+1)(1-c)}{(m+1)c+3m-1}\big)=0.
\end{eqnarray}
Suppose that $\alpha=0$, then the equation $(\ref{7.26})$ yields $\beta=0$. Then, the assumption shows $\textsf{grad}|H|=0$. It follows $|H|$ is constant. Nevertheless, where $\alpha \neq 0$, a solution of the equation ($\ref{7.27}$) presents that $(\gamma_1+\gamma_2)< \frac{1}{m}$. Here, $(\gamma_1+\gamma_2)\rightarrow 0$, where the dimension of hypersurfaces is big enough. It implies $\theta =\frac{\pi}{4}$. So, under these circumstances the equation ($\ref{7.27}$) yields that there is no Ricci- biharmonic pseudo Hopf hypersurface, where $\varphi$- sectional curvature $c> 1$ because of $|H|^2 \geq 0$. 
If $\alpha \neq 0$ and the dimension of hypersurface is appropriate, then $(\ref{7.27})$ shows that the boundary for $\varphi$-sectional curvature and the mean curvature hold as the claimed.
\end{proof}

\begin{cor}
Let $M^{2m}$ be a Ricci-biharmonic pseudo Hopf hypersurface where $\textsf{grad}|H|=\alpha \xi$. Then, $M^{2m}$ is a minimal hypersurface.
\end{cor}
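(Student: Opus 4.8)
The plan is to specialise the two equations of Theorem~\ref{7.7} to $\textsf{grad}|H|=\alpha\xi$ and then to dispose of the surviving case by means of the eigenstructure of a pseudo-Hopf hypersurface. Throughout I use that on a pseudo-Hopf hypersurface $A\xi=-V$, which follows at once from $(\ref{6.00})$ together with $\gamma_1=-\tan\theta$ and $\gamma_2=\cot\theta$ (equivalently, from $(\ref{7.18})$).

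Inserting $\textsf{grad}|H|=\alpha\xi$ into the second equation of Theorem~\ref{7.7} and using $\eta(\xi)=1$ and $A\xi=-V$ gives
\begin{eqnarray*}
\alpha\left(-\left(1+\frac{k_1}{k_2}\right)V+m|H|\,\xi\right)=0 .
\end{eqnarray*}
Since $\xi$ and $V$ are orthonormal, the $V$-component forces $\alpha\left(1+\frac{k_1}{k_2}\right)=0$; but $1+\frac{k_1}{k_2}=\frac{k_1+k_2}{k_2}=\frac{2m}{k_2}\neq 0$, so $\alpha\equiv 0$, that is, $\textsf{grad}|H|=0$ and $|H|$ is constant (the first item of the Proposition, with $\beta=0$). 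Using now that $|H|$ is constant in the first equation of Theorem~\ref{7.7}: the normal bundle of a hypersurface being one-dimensional, $\nabla^{\perp}_{X}H=X(|H|)N=0$, so $\Delta^{\perp}H=0$ and the first equation reduces to $(|B|^{2}+l)H=0$. If $M^{2m}$ were not minimal, then $H$ would be nowhere zero, whence $|B|^{2}=-l$, a positive constant.

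Disposing of this last possibility is where I expect the real work to be. I would invoke Lemma~\ref{6.32}: relative to $TM^{2m}=D\oplus D^{\perp}$ the shape operator $A$ has eigenvalues $\gamma_1,\gamma_2$ on $D^{\perp}=\textsf{Span}\{\xi,V\}$ with $\gamma_1\gamma_2=-1$, and on $D$ the eigenvalues occur in pairs $(\lambda_i,\overline{\lambda_i})$ with $\overline{\lambda_i}=\frac{(\gamma_1+\gamma_2)\lambda_i+2-n}{2\lambda_i-(\gamma_1+\gamma_2)}$ and $n=-k_2\frac{c-1}{2}$. Hence $|B|^{2}=(\gamma_1+\gamma_2)^{2}+2+\sum_{i=1}^{m-1}(\lambda_i^{2}+\overline{\lambda_i}^{2})$, while $\textsf{trace}\,A=(\gamma_1+\gamma_2)+\sum_{i=1}^{m-1}(\lambda_i+\overline{\lambda_i})=2m|H|$. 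Combining the constancy of $|B|^{2}$ and of $\textsf{trace}\,A$ with the Codazzi identity $(\ref{7.23})$ and the structure relations $(\ref{6.27})$--$(\ref{6.28})$ — in terms of a frame adapted to $\xi$, $V$ and the pairs $e_i,\varphi e_i$, as in the proof of the preceding theorem — should pin every principal curvature, hence $|H|$, to zero. The bookkeeping with these eigenvalue relations is the delicate part; the reduction to it is a one-line substitution.
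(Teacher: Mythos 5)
Your reduction is fine as far as it goes, but it does not prove the corollary: after concluding $\alpha\equiv 0$ you are left only with the statement that $|H|$ is constant, and the passage from constant mean curvature to $|H|=0$ is exactly the content you defer to an unexecuted sketch. Nothing in Lemma~\ref{6.32}, the Codazzi relation (\ref{7.23}) or the frame relations (\ref{6.27})--(\ref{6.28}) pins the principal curvatures to zero once one only knows that $|B|^2=-l$ and $\mathsf{trace}\,A=2m|H|$ are constant; indeed the paper's own corollary following Theorem~\ref{7.7} treats precisely the CMC Ricci-biharmonic case and extracts only the restriction $c\in(-5,\frac{1}{2})$, not minimality. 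So the ``real work'' you point to is a genuine gap, and there is no indication that the proposed bookkeeping would close it.

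The paper's proof is instead a one-line use of the $\xi$-component of the very equation you display: $m|H|\alpha=0$, hence $|H|=0$ wherever $\alpha\neq0$ (the corollary is implicitly read with $\alpha\not\equiv0$; if $\alpha\equiv0$ the hypothesis degenerates to CMC and the argument gives nothing). You wrote the term $m|H|\alpha\,\xi$ but exploited only the $V$-component. A discrepancy worth flagging: with your (correct) relation $A\xi=-V$ the $V$-component reads $\alpha\bigl(1+\frac{k_1}{k_2}\bigr)=0$ and, since $1+\frac{k_1}{k_2}=\frac{2m}{k_2}\neq0$, it kills $\alpha$ outright, so under your computation the hypothesis $\alpha\neq0$ is untenable and minimality cannot be recovered from the tangential equation at all; the paper instead substitutes $A_H\xi=-|H|V$ from (\ref{7.18}) in place of $A\xi$, so its $V$-equation is $-\alpha\bigl(|H|+\frac{k_1}{k_2}\bigr)=0$ and its conclusion rests solely on the $\xi$-component. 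Either way, the intended argument is the $\xi$-component, not a CMC analysis, and your proposal as written does not establish the stated minimality.
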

\begin{proof}
In this case applying the Theorem $\ref{7.7}$ follows
\begin{eqnarray}
\left\{
  \begin{array}{ll}
    \hbox{$\Delta^{\perp}H= (|B|^2+ l)H $\nonumber;} \\
    \hbox{A$ (\alpha \xi)-\frac{k_1}{k_2}\alpha V + m |H|\alpha \xi= 0$.}
  \end{array}
\right.
\end{eqnarray}
since $\textsf{grad}|H|=\alpha \xi$. Then from (\ref{7.18}) and (\ref{7.25}) the second terms of the above implies
\begin{eqnarray*}
\left\{
  \begin{array}{ll}\label{7.28}
    \hbox{$-\alpha(|H|+\frac{(m+1)(1-c)}{(m+1)c+3m-1})=0$,} \\
    \hbox{$ m|H|\alpha=0.$}
  \end{array}
\right.
\end{eqnarray*}
So, if $\alpha \neq 0$ it shows $|H|=0$.
\end{proof}
\begin{cor}
A Ricci-biharmonic pseudo Hopf hypersurface has the constant mean curvature where $\textsf{grad}|H|=\beta V$. 
\end{cor}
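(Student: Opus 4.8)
The plan is to apply Theorem~\ref{7.7} directly; this is the $\alpha=0$ branch of the preceding Proposition, so the argument is short. First I would record that $V=-\varphi N$ is orthogonal to the structural vector field, $\eta(V)=\overline{g}(\xi,V)=0$, so that under the hypothesis $\textsf{grad}|H|=\beta V$ we get $\eta(\textsf{grad}|H|)=\beta\,\eta(V)=0$. Hence the correction term $\frac{k_1}{k_2}\eta(\textsf{grad}|H|)V$ in the tangential equation of Theorem~\ref{7.7} vanishes, and that equation collapses to
\begin{eqnarray*}
\beta\,A V + m|H|\,\beta\,V = 0.
\end{eqnarray*}

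Next I would substitute the pseudo-Hopf normal form $AV=-\xi+(\gamma_1+\gamma_2)V$, which is exactly $(\ref{7.25})$ and follows from $AW_1=\gamma_1W_1$, $AW_2=\gamma_2W_2$ together with $(\ref{6.00})$ (equivalently from $A\xi=-|H|V$ in $(\ref{7.18})$ and $AD^{\perp}\subseteq D^{\perp}$). The displayed identity then reads
\begin{eqnarray*}
-\beta\,\xi + \beta\big((\gamma_1+\gamma_2)+m|H|\big)V = 0.
\end{eqnarray*}
Since $\xi$ and $V$ are linearly independent (they span $D^{\perp}=\textsf{Span}\{\xi,V\}$), comparing the $\xi$-components forces $\beta=0$. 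Therefore $\textsf{grad}|H|=\beta V=0$, i.e.\ $|H|$ is constant, as claimed.

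There is essentially no serious obstacle: the proof is linear algebra in the $2$-plane $D^{\perp}$. The only two points that require care are verifying $\eta(V)=0$ so that the $V$-term in Theorem~\ref{7.7} disappears, and invoking the pseudo-Hopf structure to write $AV$ in the form $-\xi+(\gamma_1+\gamma_2)V$, so that the $\xi$-component of the tangential biharmonic equation can be isolated and seen to vanish only when $\beta=0$.
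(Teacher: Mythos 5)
Your proposal is correct and follows essentially the same route as the paper: the paper's system $(\ref{7.29})$ is precisely the $V$- and $\xi$-components you obtain after substituting $\textsf{grad}|H|=\beta V$ and $AV=-\xi+(\gamma_1+\gamma_2)V$ into the tangential equation of Theorem~\ref{7.7}, with $\beta=0$ forced by the $\xi$-component exactly as you argue.
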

\begin{proof}
Applying the Theorem $\ref{7.7}$ implies
\begin{eqnarray}
\left\{
  \begin{array}{ll}\label{7.29}
    \hbox{$\beta(\gamma_1+\gamma_2+ m|H|)=0$,} \\
    \hbox{$ \beta=0,$}
  \end{array}
\right.
\end{eqnarray}
where $\textsf{grad}|H|=\beta V$. It presents that $\beta=0$ and $|H|$ is constant.
\end{proof}

\bibliographystyle{cas-model2-names}



\end{document}